\documentclass[12pt,twoside]{article}

\usepackage{fancyhdr}
\usepackage{amsmath}
\usepackage{amssymb}
\usepackage{amsfonts}
\usepackage{euscript}
\usepackage{oldgerm}
\usepackage{calligra}
\usepackage{mathrsfs}
\usepackage{hyperref}
\usepackage{url}
\usepackage{path}
\usepackage{ecltree}
\usepackage{epsfig}
\usepackage{lastpage}
\usepackage{epic}
\usepackage{multirow}
\usepackage{graphicx}
\usepackage{stmaryrd}
\usepackage{wasysym}
\usepackage{pifont}

\usepackage{draftwatermark}
\SetWatermarkAngle{45}
\SetWatermarkLightness{0.9}
\SetWatermarkFontSize{5cm}
\SetWatermarkScale{2}
\SetWatermarkText{ }

\voffset=-2.5cm \hoffset=-1.625cm
\parskip=.15cm
\topskip=-3cm
\textwidth=19cm \textheight=24.5cm
\addtolength{\evensidemargin}{-1.75cm}

\renewcommand{\thefootnote}{\fnsymbol{footnote}}

\long\def\sfootnote[#1]#2{\begingroup
\def\thefootnote{\fnsymbol{footnote}}\footnote[#1]{#2}\endgroup}

\newtheorem{theorem}{Theorem}
\newtheorem{definition}[theorem]{Definition}

\newtheorem{lemma}[theorem]{Lemma}
\newtheorem{corollary}[theorem]{Corollary}

\newtheorem{example}[theorem]{Example}
\newtheorem{conjecture}[theorem]{Conjecture}
\newtheorem{question}[theorem]{Question}
\newtheorem{problem}[theorem]{Problem}
\newtheorem{remark}[theorem]{Remark}

\newenvironment{proof}{\noindent\mbox{\bf Proof.}}
{\hfill\mbox{$\Subset\!\!\!\!\Supset$}\bigskip}









\begin{document}
\pagestyle{fancy}
\lhead[page \thepage \ (of \pageref{LastPage})]{}
\chead[{\bf Separating Bounded Arithmetics by Herbrand Consistency}]{{\bf Separating Bounded Arithmetics by Herbrand Consistency}}
\rhead[]{page \thepage \ (of \pageref{LastPage})}
\lfoot[\copyright\ {\sf Saeed Salehi 2010}]{$\varoint^{\Sigma\alpha\epsilon\epsilon\partial}_{\Sigma\alpha\ell\epsilon\hslash\imath}\centerdot${\footnotesize {\rm ir}}}
\cfoot[{\footnotesize {\tt http:\!/\!/saeedsalehi.ir/}}]{{\footnotesize {\tt http:\!/\!/saeedsalehi.ir/}}}
\rfoot[$\varoint^{\Sigma\alpha\epsilon\epsilon\partial}_{\Sigma\alpha\ell\epsilon\hslash\imath}\centerdot${\footnotesize {\rm ir}}]{\copyright\ {\sf Saeed Salehi 2010}}
\renewcommand{\headrulewidth}{1pt}
\renewcommand{\footrulewidth}{1pt}
\thispagestyle{empty}

\begin{center}
\begin{table}
\begin{tabular}{| c | l  || l | c |}
\hline
 \multirow{7}{*}{\includegraphics[scale=0.75]{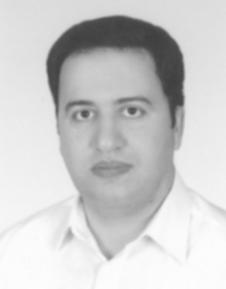}}&    &  &
 \multirow{7}{*}{ \ \ ${\huge \varoint^{\Sigma\alpha\epsilon\epsilon\partial}_{\Sigma\alpha\ell\epsilon\hslash\imath}\centerdot}${{\rm ir}}} \ \ \ \ \\
 &     \ \ {\large{\sc Saeed Salehi}}  \ \  \ & \ \    Tel: \, +98 (0)411 339 2905  \ \  &  \\
 &   \ \ Department of Mathematics \ \  \ & \ \ Fax: \ +98 (0)411 334 2102 \ \  & \\
 &   \ \ University of Tabriz \ \ \  & \ \ E-mail: \!\!{\tt /root}{\sf @}{\tt SaeedSalehi.ir/} \ \  &  \\
 &  \ \ P.O.Box 51666--17766 \ \ \ &   \ \ \ \ {\tt /SalehiPour}{\sf @}{\tt TabrizU.ac.ir/} \ \  &  \\
 &   \ \ Tabriz, Iran \ \ \ & \ \ Web: \  \ {\tt http:\!/\!/SaeedSalehi.ir/} \ \ &  \\
 &    &  &  \\
 \hline
\end{tabular}
\end{table}
\end{center}


\vspace{2em}

\begin{center}
{\bf {\Large  Separating Bounded Arithmetics by Herbrand Consistency
}}
\end{center}

\vspace{2em}
\begin{abstract}
The problem of $\Pi_1-$separating the hierarchy of bounded arithmetic has been studied in the paper. It is shown that the notion of Herbrand Consistency, in its full generality, cannot $\Pi_1-$separate the theory ${\rm I\Delta_0+\bigwedge_j\Omega_j}$ from ${\rm I\Delta_0}$; though it can $\Pi_1-$separate ${\rm I\Delta_0+Exp}$ from ${\rm I\Delta_0}$. This extends a result of L. A. Ko{\l}odziejczyk (2006), by showing the unprovability of the Herbrand Consistency of ${\rm I\Delta_0}$ in the theory ${\rm I\Delta_0+\bigwedge_j\Omega_j}$.

\bigskip

\centerline{${\backsim\!\backsim\!\backsim\!\backsim\!\backsim\!\backsim\!\backsim\!
\backsim\!\backsim\!\backsim\!\backsim\!\backsim\!\backsim\!\backsim\!
\backsim\!\backsim\!\backsim\!\backsim\!\backsim\!\backsim\!\backsim\!
\backsim\!\backsim\!\backsim\!\backsim\!\backsim\!\backsim\!\backsim\!
\backsim\!\backsim\!\backsim\!\backsim\!\backsim\!\backsim\!\backsim\!
\backsim\!\backsim\!\backsim\!\backsim\!\backsim\!\backsim\!\backsim\!
\backsim\!\backsim\!\backsim\!\backsim\!\backsim\!\backsim\!\backsim\!
\backsim\!\backsim\!\backsim\!\backsim\!\backsim\!\backsim\!\backsim\!
\backsim\!\backsim\!\backsim\!\backsim\!\backsim\!\backsim\!\backsim\!
\backsim\!\backsim\!\backsim\!\backsim\!\backsim\!\backsim\!\backsim\!
\backsim\!\backsim\!\backsim}$}

\bigskip

\noindent {\bf 2010 Mathematics Subject Classification}: Primary 03F30, 03F25; Secondary 03F05, 03F40.

\noindent {\bf Keywords}: Bounded Arithmetics; Herbrand Consistency; $\Pi_1-$Conservative Extensions.

\end{abstract}


\bigskip

\vfill

\begin{center}
\begin{tabular}{| c |}
\hline
  \\
 {\sc Saeed Salehi}, {\bf Separating Bounded Arithmetics by Herbrand Consistency},  \textmd{Manuscript 2010}.  \\
\\
 {\large {\tt http:\!/\!/saeedsalehi.ir/
  }}
     \qquad  \quad
   {\sf Status}: 
 {\sc MANUSCRIPT ({Submitted})} 
  \\
  \\
   \hline
\end{tabular}
\end{center}
\vspace{-1em}
\hspace{.75em} \textsl{\footnotesize Date: 30 August 2010}

\vfill

\bigskip

\centerline{page 1 (of \pageref{LastPage})}


\newpage
\setcounter{page}{2}
\SetWatermarkAngle{55}
\SetWatermarkLightness{0.925}
\SetWatermarkFontSize{50cm}
\SetWatermarkScale{2.25}
\SetWatermarkText{\!\!{\sc MANUSCRIPT (Submitted)}}

\section{Introduction}
One of the consequences of G\"odel's Incompleteness Theorems is the separation of \textsf{Truth} and \textsf{Provability}, in the sense that there are true sentences which are not provable, in sufficiently strong theories. Moreover, those true and unprovable sentences could be $\Pi_1$ (see subsection 3.2). Thus \textsf{Truth} is not $\Pi_1-$conservative over \textsf{Provable}. G\"odel's Second Incompleteness Theorem provides a concrete candidate for $\Pi_1-$separating a theory $T$ over its subtheory $S$, and that is the consistency statement of $S$; when $T$ proves the consistency of $S$, then $T$ is not a $\Pi_1-$conservative extension over $S$, since by the second incompleteness theorem of G\"odel, $S$ cannot prove its own consistency. Indeed, there are lots of $\Pi_1-$separate examples of theories (see subsection 2.2), and there are some difficult open problems relating to $\Pi_1-$separation or $\Pi_1-$conservativeness of arithmetical theories. One of the well-known ones was the $\Pi_1-$separation of ${\rm I\Delta_0+Exp}$, elementary arithmetic, from ${\rm I\Delta_0}$, bounded arithmetic. Here G\"odel's Second Incompleteness Theorem cannot be applied directly, since ${\rm I\Delta_0+Exp}$ does not prove the consistency of ${\rm I\Delta_0}$. For this $\Pi_1-$separation, Paris and Wilkie \cite{PaWi81} suggested the notion of cut-free consistency instead of the usual - Hilbert style - consistency predicate. Here one can show the provability of the cut-free consistency of ${\rm I\Delta_0}$ in the theory ${\rm I\Delta_0+Exp}$, and it was presumed that ${\rm I\Delta_0}$ should not derive its own cut-free consistency (see \cite{Wil07,Sal10} for some historical accounts). But this generalization of G\"odel's Second Incompleteness Theorem, that of unprovability of the weak notions of consistency of weak theories in themselves, took a long time to be established. For example, it was shown that ${\rm I\Delta_0}$ cannot prove the Herbrand Consistency of itself augmented with the axiom of the totality of the squaring function ($\forall x\exists y [y\!=\!x\!\cdot\!x]$) -- see \cite{Wil07}; and then, by a completely different proof, it is shown in \cite{Sal10} the unprovability of the Herbrand Consistency of ${\rm I\Delta_0}$ in itself, when its standard axiomatization is taken. Thus, one line of research was opened for investigating the status of G\"odel's Second Incompleteness Theorem for weak notions of consistencies in weak arithmetics. In another direction, one can ask whether weak notions of consistencies can $\Pi_1-$separate the hierarchies of weak arithmetics. One prominent result here is of L. A.  Ko{\l}odziejczyk \cite{Kol06} in which it was shown that the notion of Herbrand Consistency cannot $\Pi_1-$separate the theory ${\rm I\Delta_0+\bigwedge\Omega_j}$ (see subsection2.2) from ${\rm I\Delta_0+\Omega_1}$. We conjectured in \cite{Sal10} that by using our techniques and methods one can extend this result by showing the unprovability of the Herbrand Consistency of ${\rm I\Delta_0}$  in ${\rm I\Delta_0+\bigwedge\Omega_j}$ (Conjecture 39). In this paper, we prove the cojecture. The arguments of the paper go rather quickly, nevertheless some explanations and examples are presented for clarifying them. No familiarity with the papers cited in the references is assumed for reading this paper; the classic book of Peter H\'ajek and Pavel Pudl\'ak \cite{HP98} is more than enough.
\section{Herbrand Consistency and Bounded Arithmetic}
\subsection{Herbrand Consistency}
For Skolemizing formulas it is  convenient to work with formulas in {\em negation normal form}, which are formulas built up from atomic and negated atomic formulas using $\wedge, \vee, \forall,$ and $\exists$. For having more comfort we consider {\em rectified} formulas, which have the property that different quantifiers refer to different variables, and no variable appears both bound and free. Let us note that any formula can be negation normalized uniquely by converting implication ($A\rightarrow B$) to disjunction ($\neg A\vee B$) and using de Morgan's laws.
And renaming the variables can rectify the formula. Thus any formula can be rewritten in the rectified negation normal form (RNNF) in a somehow unique way (up to a variable renaming).
For any (not necessarily RNNF) existential  formula of the form $\exists x A(x)$, let   ${\mathfrak f}_{\exists x A(x)}$ be a new $m-$ary function symbol where $m$ is the number of the free variables of $\exists x A(x)$.  When $m=0$ then  ${\mathfrak f}_{\exists x A(x)}$ will obviously be a new constant symbol (cf. \cite{Buss95}).
For any RNNF formula $\varphi$ define $\varphi^{\mathsf S}$   by induction:

$\bullet  \ \varphi^{\mathsf S}=\varphi$ for atomic or negated-atomic $\varphi$;

$\bullet \ (\varphi\circ\psi)^{\mathsf S}=\varphi^{\mathsf S}\circ\psi^{\mathsf S}$
for $\circ\!\in\!\{\wedge,\vee\}$ and RNNF formulas $\varphi,\psi$;

$\bullet \ (\forall x\varphi)^{\mathsf S}=\forall x\varphi^{\mathsf S}$;

$\bullet \ (\exists x\varphi)^{\mathsf S}=\varphi^{\mathsf S}[{\mathfrak f}_{\exists x \varphi(x)}(\overline{y})/x]$ where $\overline{y}$ are the free
variables of $\exists x \varphi(x)$ and the formula
$\varphi^{\mathsf S}[{\mathfrak f}_{\exists x \varphi(x)}(\overline{y})/x]$
results from the formula $\varphi^{\mathsf S}$ by replacing all the occurrences of
the variable $x$ with the term ${\mathfrak f}_{\exists x \varphi(x)}(\overline{y})$.

\noindent Finally the Skolemized form of a formula $\psi$ is obtained by

(1) negation normalizing and rectifying it to $\varphi$;

(2) getting $\varphi^{\mathsf S}$ by the above inductive procedure;

(3)  removing all the remaining  (universal) quantifiers  in $\varphi^{\mathsf S}$.

\noindent We denote thus resulted Skolemized form of $\psi$ by $\psi^{\rm Sk}$. Note that our way of Skolemizing did not need prenex normalizing a formula. And it results in a unique (up to a variable renaming) Skolemized formula.
\begin{example}\label{example1}
{\rm Take $0$ be a constant symbol, ${\mathfrak s}$ be a unary function symbol, $+$ and $\cdot$ be two binary function symbols, and $\leqslant$ be a binary predicate symbol. Let $A$ be the sentence $\forall x\forall y (x\leqslant y \leftrightarrow \exists z
[z+x=y])$ which is an axiom of Robinson's Arithmetic ${\rm Q}$ (see Example \ref{example2}), and let $B$ be  $\theta(0)\!\wedge\!\forall x [\theta(x)\!\rightarrow\!\theta(x+1)]\!\Rightarrow\!\forall x\theta(x)$ where  $\theta(x)\!=\!\exists y(y\!\leqslant\!x\!\cdot\!x\!\wedge\!y\!=\!x\!\cdot\!x)$. This is an axiom of the theory ${\rm I\Delta_0}$ (see subsection 2.2). The rectified negation normalized forms of these sentences can be obtained as follows:

$C=A^{\rm RNNF}=\forall x\forall y \big([x\not\!\leqslant\!y\vee \exists u (u+x=y)] \wedge [\forall z (z+x\not=y)\vee x\!\leqslant\!y]\big)$, and

$D=B^{\rm RNNF}=\forall u (u\not\!\leqslant\!0\!\cdot\!0\vee u\not=0\!\cdot\!0)\bigvee$

\qquad \ \qquad \ \qquad   $\exists w\big[(\exists z [z\!\leqslant\!w\!\cdot\!w\wedge z=w\!\cdot\!w])\wedge (\forall v[v\not\!\leqslant\!({\mathfrak s}w)\!\cdot\!({\mathfrak s}w)\vee v\not=({\mathfrak s}w)\!\cdot\!({\mathfrak s}w)])\big]\bigvee$

\qquad \ \qquad \ \qquad   $\forall x\exists y [y\!\leqslant\!x\!\cdot\!x\wedge y=x\!\cdot\!x]$.

\noindent Let ${\mathfrak h}$ stand for ${\mathfrak f}_{\exists u
(u+x=y)}$, ${\mathfrak q}(\xi)$ be the Skolem function symbol for the formula $\exists z [z\!\leqslant\!\xi\!\cdot\!\xi\!\wedge\!z\!=\!\xi\!\cdot\!\xi]$, and ${\mathfrak c}$ abbreviate the Skolem constant symbol for  $\exists w\big[(\exists z [z\!\leqslant\!w\!\cdot\!w\wedge z=w\!\cdot\!w])\wedge (\forall v[v\not\!\leqslant\!({\mathfrak s}w)\!\cdot\!({\mathfrak s}w)\wedge v\not=({\mathfrak s}w)\!\cdot\!({\mathfrak s}w)])\big]$.
Then $C^{\mathsf S}$ and $D^{\mathsf S}$ are:

$C^{\mathsf S}=\forall x\forall y \big([x\not\!\leqslant\!y\vee  ({\mathfrak h}(x,y)+x=y)] \wedge [\forall z (z+x\not=y)\vee x\!\leqslant\!y]\big)$, and

$D^{\mathsf S}=\forall u (u\not\!\leqslant\!0\!\cdot\!0\vee u\not=0\!\cdot\!0)\bigvee$

 \quad \ \  \quad $\big[({\mathfrak q}({\mathfrak c})\!\leqslant\!{\mathfrak c}\!\cdot\!{\mathfrak c}\wedge {\mathfrak q}({\mathfrak c})={\mathfrak c}\!\cdot\!{\mathfrak c})\wedge \forall v(v\not\!\leqslant\!({\mathfrak s}{\mathfrak c})\!\cdot\!({\mathfrak s}{\mathfrak c})\vee v\not=({\mathfrak s}{\mathfrak c})\!\cdot\!({\mathfrak s}{\mathfrak c}))\big]\bigvee$

\quad \ \  \quad $\forall x ({\mathfrak q}(x)\!\leqslant\!x\!\cdot\!x\wedge {\mathfrak q}(x)=x\!\cdot\!x)$.

\noindent Finally the Skolemized forms of $A$ and $B$ are obtained as:

$A^{\rm Sk}=[x\not\!\leqslant\!y\vee  ({\mathfrak h}(x,y)+x=y)] \wedge [(z+x\not=y)\vee x\!\leqslant\!y]$, and

$B^{\rm Sk}= (u\not\!\leqslant\!0\!\cdot\!0\vee u\not=0\!\cdot\!0)\bigvee$

 \qquad   \quad  $\big[({\mathfrak q}({\mathfrak c})\!\leqslant\!{\mathfrak c}\!\cdot\!{\mathfrak c}\wedge {\mathfrak q}({\mathfrak c})={\mathfrak c}\!\cdot\!{\mathfrak c})\wedge (v\not\!\leqslant\!({\mathfrak s}{\mathfrak c})\!\cdot\!({\mathfrak s}{\mathfrak c})\vee v\not=({\mathfrak s}{\mathfrak c})\!\cdot\!({\mathfrak s}{\mathfrak c}))\big]\bigvee$

 \qquad   \quad $({\mathfrak q}(x)\!\leqslant\!x\!\cdot\!x\wedge {\mathfrak q}(x)=x\!\cdot\!x)$.
 \hfill$\subset\!\!\!\!\supset$}\end{example}
An {\em Skolem instance} of a formula $\psi$ is any formula resulted
from substituting the free variables of $\psi^{\rm Sk}$ with some
terms. So, if $x_1,\ldots,x_n$ are the free variables of
$\psi^{\rm Sk}$ (thus written as $\psi^{\rm
Sk}(x_1,\ldots,x_n)$) then an Skolem instance of $\psi$ is
$\psi^{\rm Sk}[t_1/x_1,\cdots,t_n/x_n]$ where $t_1,\ldots,t_n$
are terms (which could be constructed from the Skolem functions
symbols).
 Skolemized form of a theory
$T$ is by definition $T^{\rm Sk}=\{\varphi^{\rm Sk}\mid\varphi\!\in\! T\}$. Herbrand's Theorem appears in several forms in the literature. As we wish to arithmetize a somewhat general notion of Herbrand Consistency, below we present a version of Herbrand's fundamental theorem, also attributed to G\"odel and Skolem, which will make the formalization easier  (cf. \cite{Buss95}).
\begin{theorem}[G\"odel - Herbrand - Skolem]{\rm Any theory $T$ is
equiconsistent with its Skolemized theory $T^{\rm Sk}$. Or in other words, $T$ is consistent if and only if every finite set of Skolem
instances of $T$ is (propositionally) satisfiable.\hfill $\Subset\!\!\!\!\Supset$
}\end{theorem}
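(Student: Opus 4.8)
The plan is to prove the two stated formulations in turn, passing freely between ``consistent'' and ``has a model'' by G\"odel's Completeness Theorem. The first task is the equiconsistency of $T$ with $T^{\rm Sk}$; the second is the reformulation of the consistency of $T^{\rm Sk}$ as propositional satisfiability of finite sets of Skolem instances. The core of the equiconsistency is a claim proved by induction on the inductive build-up of $\varphi^{\mathsf S}$: for every RNNF formula $\varphi$, the universal closure of $\varphi^{\mathsf S}$ logically implies $\varphi$, and, conversely, every model of $\varphi$ expands — by interpreting the freshly introduced Skolem symbols ${\mathfrak f}_{\exists x\varphi(x)}$ — to a model of the universal closure of $\varphi^{\mathsf S}$. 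The implication direction is immediate once one checks that replacing a bound existential variable by its Skolem term only strengthens the matrix, and that this passes through $\wedge,\vee,$ and $\forall$. Deleting the outermost universal quantifiers in step (3) of the Skolemization changes nothing, since open formulas are read with their free variables universally quantified.

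The converse (expansion) direction is where the Axiom of Choice enters. Given $M\models\varphi$, for each existential subformula $\exists x\,\varphi(x,\overline{y})$ and each tuple $\overline{a}$ I would set ${\mathfrak f}_{\exists x\varphi(x)}(\overline{a})$ to be a witness for $\varphi(x,\overline{a})$ whenever one exists in $M$, and to be an arbitrary element otherwise. Because the Skolem symbols attached to distinct subformulas are pairwise distinct and new, these choices can be made independently and simultaneously, yielding a single expansion. Applying the claim to every $\varphi\in T$ then gives both halves at once: if $T^{\rm Sk}$ has a model, its reduct to the original language models $T$; and if $M\models T$, a suitable simultaneous expansion $M^{*}$ satisfies $T^{\rm Sk}$. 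Hence $T$ and $T^{\rm Sk}$ are equiconsistent.

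For the ``in other words'' clause I would exploit that $T^{\rm Sk}$ is an \emph{open} theory: each member $\varphi^{\rm Sk}$ is quantifier-free, its free variables implicitly universally quantified. Herbrand's Theorem for universal theories reduces first-order satisfiability of such a theory to the propositional level: $T^{\rm Sk}$ is satisfiable exactly when the set of its ground instances — substituting closed terms built from the original and Skolem function and constant symbols for the free variables, i.e.\ the ground Skolem instances $\psi^{\rm Sk}[t_1/x_1,\cdots,t_n/x_n]$ — is satisfiable when each closed atomic subformula is read as an independent propositional variable. Propositional compactness then converts satisfiability of this (in general infinite) collection into ``every finite set of Skolem instances is (propositionally) satisfiable''. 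Chaining this with the equiconsistency established above delivers the full biconditional.

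The main obstacle I anticipate is the expansion direction of the equiconsistency claim. One must verify that the inductive Skolemization is matched correctly by the choice functions: since ${\mathfrak f}_{\exists x\varphi(x)}$ takes as arguments exactly the free variables $\overline{y}$ of $\exists x\varphi(x)$, the witnesses selected for an inner existential must be compatible with the values already fixed for the surrounding Skolem terms, so the induction has to be set up so that the outer choices are made first. The $\wedge/\vee$ steps also need a little care, because a disjunct of a true disjunction need not itself be satisfied, yet its Skolem functions must still be interpreted somewhere; the rule ``choose a witness when one exists and choose arbitrarily otherwise'' is exactly what makes these cases go through cleanly. By comparison, the Herbrand-plus-compactness half is routine given the standard statement of Herbrand's Theorem.
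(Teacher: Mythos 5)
The paper itself offers no proof of this theorem: it is stated as a classical result (attributed to G\"odel, Herbrand and Skolem, with a pointer to Buss's survey) and the end-of-proof mark follows the statement immediately. So there is nothing internal to compare your argument against; what you have written is the standard two-stage textbook proof, and its architecture is correct. Stage one (mutual interpretability of $T$ and $T^{\rm Sk}$ via the implication ``Skolemized form entails the original'' plus the choice-based expansion of a model of $T$ to a model of $T^{\rm Sk}$) is sound; in fact your worry about ordering the choices ``outer first'' is unnecessary, since each symbol ${\mathfrak f}_{\exists x\varphi(x)}$ takes as arguments \emph{all} free variables of its subformula, so the witness for an inner existential is chosen uniformly for every value of the outer variable and the compatibility you want is automatic from the induction.

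The one step that would fail as literally stated is in stage two: for a language with equality (as here, where atoms are $t=s$ and $t\leqslant s$), satisfiability of the ground Skolem instances ``when each closed atomic subformula is read as an independent propositional variable'' is \emph{not} equivalent to first-order satisfiability --- a propositional assignment may set $p[t=s]=1$ while giving $\varphi(t)$ and $\varphi(s)$ different values, and then no first-order model can be extracted. You must either adjoin the ground instances of the reflexivity and congruence axioms to the finite sets in question, or build these conditions into the notion of propositional satisfaction. The paper does the latter: its ``evaluations'' are required to satisfy $p[t=t]=1$ and the congruence condition (2) of Section 2.1, and the Herbrand model is then the quotient of the term algebra by the induced congruence $\backsim_p$ --- exactly the construction ${\textswab M}(\Lambda,p)$ of Lemma \ref{lm:mlambdap}. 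With that repair (and compactness, as you say) your argument goes through.
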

Our means of propositional satisfiability is by {\em evaluations}, which are defined to be any function $p$ whose domains are the set of all
atomic formulas constructed from a given set of terms $\Lambda$ and
whose  ranges are the set $\{0,1\}$ such that

 (1) \ $p \, [t\!=\!t]=1$ for all $t\!\in\!\Lambda$; and for any terms
$t,s\!\in\!\Lambda$,

 (2) \  if $p\, [t\!=\!s]=1$ then $p\, [\varphi(t)]=p\,
[\varphi(s)]$ for any atomic formula $\varphi(x)$.

\noindent The relation $\backsim_p$ on $\Lambda$ is defined by
$t\backsim_p s \iff p[t=s]=1$ for $t,s\!\in\!\Lambda$.
One can see that the  relation
$\backsim_p$   is an equivalence relation, and moreover is a  congruence relation as well. That is, for any set of terms $t_i$ and $s_i$ ($i=1,\ldots,n$) and function symbol $f$, if $p[t_1=s_1]=\cdots p[t_n=s_n]=1$ then $p[f(t_1,\ldots,t_n)=f(s_1,\ldots,s_n)]=1$.

\noindent  The $\backsim_p\!\!-$class of a term
$t$ is denoted by $t/p$; and the set of all such $p-$classes for
each $t\!\in\!\Lambda$ is denoted by $\Lambda/p$.
For simplicity, we write $p\models\varphi$ instead of
$p\,[\varphi]=1$; thus $p\not\models\varphi$ stands for
$p\,[\varphi]=0$. This definition of {\em satisfying} can be
generalized to other open (RNNF) formulas as usual.

 If all terms appearing in an Skolem instance of $\varphi$
belong to the set $\Lambda$, that formula is called an  Skolem
instance of $\varphi$ {\em available} in $\Lambda$.
 An evaluation defined on $\Lambda$ is called a {\em
$\varphi-$evaluation} if it satisfies all the Skolem instances of
$\varphi$ which are available in $\Lambda$.
 Similarly, for a theory $T$,  a {\em $T-$evaluation} on
$\Lambda$ is an evaluation on $\Lambda$ which satisfies every Skolem
instance of every formula of $T$ which is available in
$\Lambda$.
 By Herbrand's Theorem, a theory $T$ is consistent if and only if for every set of terms $\Lambda$ (constructed from the Skolem terms of axioms of $T$) there exists a $T-$evaluation on $\Lambda$. We will use this reading of Herbrand's Theorem for defining the notion of {\em Herbrand Consistency}. Thus {\em Herbrand Provability} of a formula $\varphi$ in a theory $T$ is equivalent to the existence of a set of terms  on which there cannot exist any $(T\cup\{\neg\varphi\})-$evaluation.
\begin{example}\label{example2}
{\rm Let ${\rm Q}$ denote Robinson's Arithmetic over
the language of arithmetic $\langle0,{\mathfrak s},+,\cdot,\leqslant\rangle$, where $0$ is a
constant symbol, ${\mathfrak s}$ is a unary function symbol, $+,\cdot$ are binary
function symbols, and $\leqslant$ is a binary predicate symbol, whose axioms are:
\begin{align*}
A_1: & \ \  \forall x ({\mathfrak s} x\not=0) & A_2: & \ \  \forall x\forall y ({\mathfrak s} x ={\mathfrak s} y\rightarrow x=y)\\
A_3: & \ \   \forall x (x\not=0\rightarrow \exists y[x={\mathfrak s} y]) & \ \   A_4: & \ \   \forall x\forall y (x\!\leqslant\!y \leftrightarrow \exists z
[z+x=y])\\
A_5: & \ \   \forall x (x+0=x) & A_6: & \ \   \forall x\forall y (x+{\mathfrak s} y={\mathfrak s} (x+y))\\
A_7:  & \ \   \forall x (x\cdot 0=0) & A_8: & \ \   \forall x\forall y (x\cdot{\mathfrak s} y=x\cdot y + x)
\end{align*}
Let $\varphi=\forall x(x\!\leqslant\!0\!\rightarrow\!x\!=\!0)$. We can show
${\rm Q}\vdash\varphi$; this will be proved below by Herbrand Provability.
Suppose ${\rm Q}$ has been Skolemized as below:
\begin{align*}
A_1^{\rm Sk}: & \ \   {\mathfrak s} x\not=0 & A_2^{\rm Sk}: & \ \   {\mathfrak s} x\not={\mathfrak s} y\vee x=y\\
A_3^{\rm Sk}: & \ \   x=0 \vee x={\mathfrak s}{\mathfrak p} x & A_4^{\rm Sk}: & \ \   [x\not\!\leqslant\!y\vee {\mathfrak h}(x,y)+x=y]\wedge[z+x\not=y\vee x\!\leqslant\!y]\\
A_5^{\rm Sk}: & \ \   x+0=x & A_6^{\rm Sk}: & \ \   x+{\mathfrak s} y={\mathfrak s} (x+y)\\
A_7^{\rm Sk}:  & \ \   x\cdot 0=0 & A_8^{\rm Sk}: & \ \   x\cdot{\mathfrak s} y=x\cdot y + x
\end{align*}
Here ${\mathfrak p}$ abbreviates ${\mathfrak f}_{\exists
y(x={{\mathfrak s} }y)}$ and ${\mathfrak h}$ stands for ${\mathfrak f}_{\exists z
(z+x=y)}$. Suppose $\neg\varphi$ has been Skolemized as $({\mathfrak c}\!\leqslant\!0\wedge{\mathfrak c}\not=0)$ where ${\mathfrak c}$ is the Skolem constant symbol for  $\exists x (x\!\leqslant\!0\wedge x\not=0)$. Take $\Lambda$ be the following set of terms $\Lambda=\{0, {\mathfrak c}, {\mathfrak h}({\mathfrak c},0), {\mathfrak h}({\mathfrak c},0)+{\mathfrak c}, {\mathfrak s}{\mathfrak p}{\mathfrak c}, {\mathfrak s}({\mathfrak h}({\mathfrak c},0)+{\mathfrak p}{\mathfrak c}), {\mathfrak h}({\mathfrak c},0)+{\mathfrak s}{\mathfrak p}{\mathfrak c}\}$. We show that there is no $({\rm Q}+\neg\varphi)-$evaluation on $\Lambda$. Assume (for the sake of contradiction) that $p$ is such an evaluation. Then by $A_3$ we have $p\models {\mathfrak c}={\mathfrak s}{\mathfrak p}{\mathfrak c}$. On the other hand by $A_4$ we have $p\models {\mathfrak h}({\mathfrak c},0)+{\mathfrak c}=0$, and so $p\models {\mathfrak h}({\mathfrak c},0)+{\mathfrak s}{\mathfrak p}{\mathfrak c}=0$. Then by $A_6$ we get $p\models {\mathfrak s}({\mathfrak h}({\mathfrak c},0)+{\mathfrak p}{\mathfrak c})=0$ which is a contradiction with $A_1$.
 \hfill$\subset\!\!\!\!\supset$
}\end{example}
Let us note that finding a suitable set of terms $\Lambda$ for which there cannot exist a $(T+\neg\psi)-$evaluation on $\Lambda$ is as complicated as finding a proof of $T\vdash\psi$ (even more complicated - see subsection 3.1).
%
The following is another example for illustrating the concepts of Skolem instances and evaluations, which will be used later in the paper (the proof of Theorem \ref{th:b}).
\begin{example}\label{q}
{\rm Let $B$ be as in the Example \ref{example1}, in the language $\langle0,{\mathfrak s},+,\cdot,\leqslant\rangle$. Thus,
\newline\centerline{$B=\theta(0)\!\wedge\!\forall x[\theta(x)\!\rightarrow\!\theta({\mathfrak s}x)]\!\rightarrow\!\forall x\theta(x)$
\ \ where \ \  $\theta(x)=\exists y(y\!\leqslant\!x\!\cdot\!x\!\wedge\!y\!=\!x\!\cdot\!x)$.}
We saw that the Skolemized form of $B$ is

$B^{\rm Sk}= (u\not\!\leqslant\!0\!\cdot\!0\vee u\not=0\!\cdot\!0)\bigvee$

 \qquad   \quad  $\big[({\mathfrak q}({\mathfrak c})\!\leqslant\!{\mathfrak c}\!\cdot\!{\mathfrak c}\wedge {\mathfrak q}({\mathfrak c})={\mathfrak c}\!\cdot\!{\mathfrak c})\wedge (v\not\!\leqslant\!({\mathfrak s}{\mathfrak c})\!\cdot\!({\mathfrak s}{\mathfrak c})\vee v\not=({\mathfrak s}{\mathfrak c})\!\cdot\!({\mathfrak s}{\mathfrak c}))\big]\bigvee$

 \qquad   \quad $({\mathfrak q}(x)\!\leqslant\!x\!\cdot\!x\wedge {\mathfrak q}(x)=x\!\cdot\!x)$,

\noindent where ${\mathfrak q}(\xi)$ is the Skolem function symbol for the formula $\exists z [z\!\leqslant\!\xi\!\cdot\!\xi\!\wedge\!z\!=\!\xi\!\cdot\!\xi]$ and ${\mathfrak c}$ is the Skolem constant of {$\exists w\big[(\exists z [z\!\leqslant\!w\!\cdot\!w\wedge z=w\!\cdot\!w])\wedge (\forall v[v\not\!\leqslant\!({\mathfrak s}w)\!\cdot\!({\mathfrak s}w)\wedge v\not=({\mathfrak s}w)\!\cdot\!({\mathfrak s}w)])\big]$.}
Define the set of terms $\Upsilon$ by {$\Upsilon=\{0,  0+0,  0^2, {\mathfrak c}, {\mathfrak c}^2, {\mathfrak c}^2+0, {\mathfrak s}{\mathfrak c}, {\mathfrak q}{\mathfrak c},  ({\mathfrak s}{\mathfrak c})^2,  ({\mathfrak s}{\mathfrak c})^2+0\}$} and suppose $p$ is an $(Q+B)-$evaluation on the set of terms $\Upsilon\cup\{t,t^2,{\mathfrak q}(t)\}$. The notation $\varrho^2$ is a shorthand for $\varrho\cdot\varrho$. Then $p$ must satisfy the following Skolem instance  of $B$ which is available in the set $\Upsilon\cup\{t,t^2,{\mathfrak q}(t)\}$:

$(\eth) \ \ \ \  (0\not\leqslant 0^2\vee 0\not=0^2)  \bigvee$

\qquad \; $\Big(\big({\mathfrak q}{\mathfrak c}\leqslant{\mathfrak c}^2\wedge{\mathfrak q}{\mathfrak c}={\mathfrak c}^2\big)\wedge\big(({\mathfrak s}{\mathfrak c})^2\not\leqslant({\mathfrak s}{\mathfrak c})^2
\vee ({\mathfrak s}{\mathfrak c})^2\not=({\mathfrak s}{\mathfrak c})^2\big)\Big)  \bigvee$

\qquad \; $\big({\mathfrak q}(t)\leqslant t^2\wedge {\mathfrak q}(t)=t^2\big).$

\noindent Now since $p\models 0\cdot 0=0+0=0$ then, by ${\rm Q}$'s axioms, $p\models 0\leqslant 0^2\wedge 0=0^2$, and so $p$ cannot satisfy the first disjunct of $(\eth)$. Similarly, since $p\models ({\mathfrak s}{\mathfrak c})^2+0=({\mathfrak s}{\mathfrak c})^2$ then $p\models ({\mathfrak s}{\mathfrak c})^2\leqslant ({\mathfrak s}{\mathfrak c})^2$, thus $p$ cannot satisfy the second disjunct of $(\eth)$ either, because $p\models ({\mathfrak s}{\mathfrak c})^2=({\mathfrak s}{\mathfrak c})^2$. Whence,  $p$ must satisfy the third disjunct of $(\eth)$, then necessarily $p\models {\mathfrak q}(t)=t^2$ must hold.
\hfill $\subset\!\!\!\!\supset$
}\end{example}
\subsection{Bounded Arithmetic Hierarchy}
First-order Peano arithmetic ${\rm PA}$ is the theory in the language $\langle0,{\mathfrak s},+,\cdot,\leqslant\rangle$ axiomatized by Robinson's Arithmetic ${\rm Q}$ (see Example \ref{example2}) plus the induction schema $\psi(0)\wedge\forall x[\psi(x)\rightarrow\psi({\mathfrak s}(x))]\Rightarrow\forall x\psi(x)$ for any formula $\psi(x)$. This theory is believed to encompass a large body of arithmetical truth in mathematics; the most recent conjecture (due to H. Friedman) is that a proof of Fermat's Last Theorem can be carried out inside ${\rm PA}$ (\cite{Avi03}), and indeed Andrew Wiles's proof of the theorem has been claimed to be formalized in it (\cite{Mac}). To see a simpler example, we note that
primality can be expressed in the language of arithmetic by the following formula:
$\textsf{Prime}(x)\equiv \forall y,z (y\cdot z=x\rightarrow y=1\vee z=1)$.
Then Euclid's theorem on the infinitude of the primes can be written as $\forall x \exists y [y>x \wedge \textsf{Prime}(y)]$. It can be shown that Euclid's proof can be formalized completely in ${\rm PA}$. One would wish to see how much strength of ${\rm PA}$ is necessary for proving the infinitude of the primes. An important sub-theory of Peano's Arithmetic is introduced by R. Parikh (\cite{Parikh}) as follows. A formula is called bounded  if its every quantifier is bounded, i.e., is either of the
form $\forall x\!\leqslant\! t(\ldots)$ or $\exists\, x\!\leqslant\! t(\ldots)$ where $t$ is a term; they are read as $\forall x (x\!\leqslant\! t\rightarrow\ldots)$ and $\exists x (x\!\leqslant\! t \wedge\ldots)$ respectively. The class of bounded formulas is denoted by  ${\rm \Delta_0}$. It is easy to see that bounded formulas are decidable. The theory ${\rm I\Delta_0}$, also called bounded arithmetic, is axiomatized by ${\rm Q}$ plus the induction schema for bounded formulas. An important property of this arithmetic  is that whenever ${\rm I\Delta_0}\vdash \forall \overline{x}\exists y  \ \theta(\overline{x})$ for a bounded formula $\theta$, then there exists a term (polynomial) $t(\overline{x})$ such that ${\rm I\Delta_0}\vdash \forall \overline{x}\exists y\!\leqslant\!t(\overline{x})  \ \theta(\overline{x})$ (see e.g. \cite{HP98}). An open problem in the theory of weak arithmetics is that whether or not the infinitude  of the primes can be proved inside ${\rm I\Delta_0}$. However, it is known that much of elementary number theory cannot be proved inside ${\rm I\Delta_0}$; the theory is too weak to even recognize the totality of the exponentiation function. The exponentiation function $\exp$ is defined by $\exp(x)=2^x$; the formula ${\rm Exp}$ expresses its totality: ($\forall x\exists y[y\!=\!\exp(x)]$). The theory ${\rm I\Delta_0+Exp}$, sometimes called Elementary Arithmetic, is able to formalize much of number theory. It can surely prove the infinitude  of the primes. Note that in Euclid's proof, for getting a prime number greater than $x$ one can use $x!+1$ which should have a prime factor greater than $x$ (no number non-greater than $x$ can divide it). And it can be seen that $x!<\exp\exp(x)$. Between ${\rm  I\Delta_0}$ and ${\rm I\Delta_0+Exp}$ a hierarchy of theories is considered in the literature, which has close connections with computational complexity. They are sometimes called weak arithmetics, and sometimes bounded arithmetics. The hierarchy is defined below.
The converse of $\exp$ is denoted by $\log$ which is formally defined as $\log x={\rm min}\{y\mid x\!\leqslant\!\exp(y)\}$; thus $\exp(\log x -1)\!<\!x\!\leqslant\!\exp(\log x)$. The superscripts above the function symbols indicate the iteration of the functions; e.g.,  $\exp^2(x)=\exp\exp(x)$ and $\log^3 x=\log\log\log x$. Define the function $\omega_m$ to be $\omega_m(x)=\exp^m\big((\log^m x)\cdot(\log^m x)\big)$. It is customary to define this function by induction:  ${\rm \omega_0}(x)=x^2$ and ${\rm \omega_{n+1}}(x)=\exp({\rm \omega_n}(\log x))$. Let  ${\rm \Omega_m}$ express the totality of ${\rm \omega_m}$ (i.e., ${\rm \Omega_m}\equiv \forall x\exists y [y={\rm \omega_m}(x)]$). It can be more convenient to consider the function $\omega_{-1}(x)=2x$  as well (cf. \cite{Kol06}). The hierarchy between ${\rm I\Delta_0}$ and ${\rm I\Delta_0+Exp}$ is $\{{\rm I\Delta_0+\Omega_m}\}_{{\rm m}\!\geqslant\!1}$. For example,  the theory ${\rm I\Delta_0+\Omega_1}$ can prove the infinitude  of the primes (the proof is not easy at all - see \cite{PWW}).   We first review some basic properties of the $\omega_n$ functions: $\omega_1$ dominates all the polynomials and $\omega_{m+1}$ dominates all the (finite) iterations of $\omega_m$. Let us note that $\omega_0^N(x)=x^{\exp(N)}$ and $\omega_m^N(x)=\exp^m([\log^m x]^{\exp(N)})$, also $\omega_{j+1}^N(x)=\exp(\omega_j^N(x))$, hold for any $N\!\geqslant\!1$.
\begin{lemma}\label{lem1}
{\rm For any natural $m\!\geqslant\!0$ and $N\!>\!2$, and any $x\!>\!\exp^{m+2}(N)$, we have $\omega_m^N(x)\!<\!\omega_{m+1}(x)$.
}\end{lemma}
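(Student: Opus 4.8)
The plan is to peel off the common outer $\exp^m$ from both sides and reduce the claim to a single one-variable inequality. Using the stated identities $\omega_m^N(x)=\exp^m\big([\log^m x]^{\exp(N)}\big)$ and $\omega_{m+1}(x)=\exp^{m+1}\big([\log^{m+1}x]^2\big)=\exp^m\big(\exp([\log^{m+1}x]^2)\big)$, and writing $y=\log^m x$ (so that $\log^{m+1}x=\log y$), the assertion $\omega_m^N(x)<\omega_{m+1}(x)$ becomes, after applying the strictly increasing map $\exp^m$ to both sides, the inequality $y^{\exp(N)}<\exp\big([\log y]^2\big)$. This reduction handles all $m$ uniformly, so no induction on $m$ is needed for the main structure.

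First I would record the key consequence of the hypothesis $x>\exp^{m+2}(N)$, namely that $\log^{m+1}x>\exp(N)$. This follows from the Galois-type relations $\exp(\log z)\geqslant z$ and $\log(\exp z)=z$: if instead $\log^{m+1}x\leqslant\exp(N)$, then applying the monotone map $\exp^{m+1}$ and using $\exp^{m+1}(\log^{m+1}x)\geqslant x$ (obtained by iterating $\exp(\log z)\geqslant z$ a total of $m+1$ times) together with $\exp^{m+1}(\exp(N))=\exp^{m+2}(N)$ would give $x\leqslant\exp^{m+2}(N)$, contradicting the hypothesis. Hence $\log y=\log^{m+1}x>\exp(N)$, and in particular $\log y>0$.

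Then I would close the estimate by the chain $y^{\exp(N)}\leqslant\exp(\log y)^{\exp(N)}=\exp\big((\log y)\cdot\exp(N)\big)<\exp\big((\log y)^2\big)$, where the first step uses $y\leqslant\exp(\log y)$ and the last step is just $(\log y)\cdot\exp(N)<(\log y)^2$, i.e.\ $\exp(N)<\log y$ multiplied through by $\log y>0$, followed by monotonicity of $\exp$. Feeding this back through the strictly increasing $\exp^m$ yields $\omega_m^N(x)<\omega_{m+1}(x)$, completing the argument. For concreteness I would point out the base case $m=0$, where $y=x$ and the inequality reads $x^{\exp(N)}<\exp([\log x]^2)$, matching $\omega_0^N(x)=x^{\exp(N)}$ and $\omega_1(x)=\exp([\log x]^2)$.

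The hard part is not the algebra but being careful with the discrete logarithm. Since $\log$ is the integer-valued function $\log x=\min\{y\mid x\leqslant\exp(y)\}$, I can rely only on $\exp(\log x-1)<x\leqslant\exp(\log x)$ rather than an exact inverse, so I must make sure the single inequality I invoke, $y\leqslant\exp(\log y)$, points the correct way, and that the strictness in $\exp(N)<\log y$ is genuine rather than merely $\exp(N)\leqslant\log y$. The contradiction argument in the second step is exactly what guarantees this strictness from the hypothesis $x>\exp^{m+2}(N)$; the only routine induction I would spell out is the verification of $\exp^{m+1}(\log^{m+1}x)\geqslant x$ by iterating $\exp(\log z)\geqslant z$.
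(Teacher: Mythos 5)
Your proof is correct and is essentially the paper's argument: both reduce the claim to the single core inequality $\exp(N)\cdot\log y<(\log y)^2$, valid once $\log y>\exp(N)$, and then restore the outer $\exp^m$ by monotonicity. The only difference is organizational — the paper peels off $\exp^m$ one layer at a time by induction on $m$ (noting $\log x>\exp^{m+1}(N)$ to feed the induction hypothesis), whereas you peel all layers at once via the closed-form identity $\omega_m^N(x)=\exp^m\big([\log^m x]^{\exp(N)}\big)$ that the paper records just before the lemma; your extra care with the discrete logarithm (using only $y\leqslant\exp(\log y)$ and verifying the strictness of $\log y>\exp(N)$) is a point the paper leaves implicit.
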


\begin{proof}
For $m=0$ we note that $2^N\!\cdot\!\log x\!<\!(\log x)^2$ for any $x\!>\!\exp^2(N)$. Thus $\exp (2^N\log x)\!<\!\exp((\log x)^2)$, which implies that $\omega_0^N(x)\!<\!\omega_1(x)$.

For $m\!\geqslant\!1$ we can use an inductive argument. For any $x\!>\!\exp^{m+2}(N)$ we have $\log x\!>\!\exp^{m+1}(N)$,  so by the induction hypothesis $\omega_{m-1}^N(\log x)\!<\!\omega_m(\log x)$ holds. Then $\exp[\omega_{m-1}^N(\log x)]\!<\!\exp[\omega_m(\log x)]$, and so $\omega_m^N(x)\!<\!\omega_{m+1}(x)$.
\end{proof}

We now present a generalization of the lemma, which will be used later.
\begin{lemma}\label{lem2}
{\rm For any  $m\!\geqslant\!-1, N\!\geqslant\!1$ and $x\!>\!\exp^{m+2}(4N+4)$, there exists some $y  \ (\leqslant\!x)$ such that $$\omega_m^N(y)\!<\!x\!\leqslant\!\omega_{m+1}(y).$$
}\end{lemma}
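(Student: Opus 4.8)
The plan is to choose $y$ to be the least natural number with $\omega_{m+1}(y)\geqslant x$. Since $\omega_{m+1}$ is strictly increasing and $\omega_{m+1}(x)\geqslant x$ for all the relevant (large) $x$ — for $m=-1$ because $\omega_0(x)=x^2\geqslant x$, and for $m\geqslant0$ because $(\log^{m+1}x)^2\geqslant\log^{m+1}x$ feeds through the increasing map $\exp^{m+1}$ — such a $y$ exists and automatically satisfies $y\leqslant x$, as the statement demands. By construction $x\leqslant\omega_{m+1}(y)$, which is already the right-hand inequality, and by the minimality of $y$ we have $\omega_{m+1}(y-1)<x$.

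It then remains to prove the left-hand inequality $\omega_m^N(y)<x$. Since the only lower bound available for $x$ is $\omega_{m+1}(y-1)<x$, I would reduce everything to the single comparison $\omega_m^N(y)\leqslant\omega_{m+1}(y-1)$, which together with $\omega_{m+1}(y-1)<x$ gives $\omega_m^N(y)<x$. First I would extract that $y$ is large: from $\omega_{m+1}(y)\geqslant x>\exp^{m+2}(4N+4)=\exp^{m+1}(2^{4N+4})$ and the closed form $\omega_{m+1}(y)=\exp^{m+1}\big((\log^{m+1}y)^2\big)$ one peels off $\exp^{m+1}$ to get $(\log^{m+1}y)^2>2^{4N+4}$, hence $\log^{m+1}y>2^{2N+2}$ and $y>\exp^{m+2}(2N+2)$. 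This largeness, which comes precisely from the slack $4N+4$ rather than $N$, is what makes the comparison go through.

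The heart of the matter is this comparison $\omega_m^N(y)\leqslant\omega_{m+1}(y-1)$, a shifted strengthening of Lemma~\ref{lem1}, and I expect it to be the main obstacle, since it forces one to absorb simultaneously the drop of the argument from $y$ to $y-1$ and the gap between the $\exp(N)=2^N$-th power hidden in $\omega_m^N$ and the mere squaring in $\omega_{m+1}$. For $m=-1$ this is elementary: $\omega_{-1}^N(y)=2^Ny$ and $\omega_0(y-1)=(y-1)^2$, so the claim reduces to $2^Ny\leqslant(y-1)^2$, which holds once $y\geqslant2^N+3$, and indeed $y>\exp^{2}(2N+2)\geqslant2^{2N+2}$. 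For $m\geqslant0$ I would use $\omega_m^N(y)=\exp^m\big((\log^m y)^{2^N}\big)$ and $\omega_{m+1}(y-1)=\exp^m\big(2^{(\log^{m+1}(y-1))^2}\big)$; peeling off the common outer $\exp^m$ and taking one further logarithm turns the claim into $2^N\cdot\log^{m+1}y\leqslant(\log^{m+1}(y-1))^2$.

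Writing $a=\log^{m+1}y$ and using the elementary estimate $\log^{m+1}(y-1)\geqslant a-1$ — obtained by iterating $\log(t-1)\geqslant\log t-1$ while the intermediate iterated logarithms stay above $2$, which holds here since $y$ is enormous — the desired inequality follows from $2^N a\leqslant(a-1)^2$, i.e. $a^2-(2^N+2)a+1\geqslant0$, and this is clear because $a=\log^{m+1}y>2^{2N+2}\gg2^N+3$. This settles the comparison, hence the left-hand inequality, and together with the right-hand inequality from the choice of $y$ it completes the proof of the lemma.
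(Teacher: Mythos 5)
Your proof is correct, and it reaches the two required inequalities by a genuinely different organization than the paper's. The paper makes the ``least $y$'' choice only at the base level $m=-1$ (least $y$ with $y^2\geqslant x$), and then handles $m\geqslant 0$ by induction on $m$: apply the statement to $\log x$, obtain a witness $z$, and set $y=\exp(z)$, so that the strict inequality $\omega_{m}^N(z)<\log x$ converts into $\omega_{m+1}^N(\exp z)\leqslant\exp(\log x-1)<x$. That conjugation trick means the paper never compares anything at the shifted argument $y-1$; the ``minus one'' lives entirely inside the innermost quadratic $(z-1)^2<\log^{m+1}x$. You instead work at the top level, uniformly in $m$, taking $y$ minimal with $\omega_{m+1}(y)\geqslant x$, and the price is the auxiliary estimate $\log^{m+1}(y-1)\geqslant\log^{m+1}y-1$ together with the closed forms $\omega_m^N(y)=\exp^m\big((\log^m y)^{2^N}\big)$. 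Both are sound: the logarithm estimate follows by iterating $\log(t-1)\geqslant\log t-1$ with monotonicity (and in fact needs no largeness hypothesis at all), and both arguments ultimately rest on the same quadratic fact $2^N a\leqslant(a-1)^2$ for $a>2^{2N+2}$, which is exactly where the slack $4N+4$ (rather than $N$) is spent. Your version isolates the key comparison $\omega_m^N(y)\leqslant\omega_{m+1}(y-1)$ --- a shifted form of Lemma~\ref{lem1} --- as an explicit, reusable statement; the paper's version is shorter because the induction on $m$ hides all bookkeeping about iterated integer logarithms of $y-1$.
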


\begin{proof} We first show the lemma for $m=-1$: for any $x\!>\!\exp(4N+4)$, there exists a least $y$ such that $y^2\!\geqslant\!x$; so $(y-1)^2\!<\!x$. Also from $y^2\!>\!2^{{4N+4}}$ we have $y\!>\!2^{2N+2}$. Whence we have $x\!\leqslant\!y^2=\omega_0(y)$, and also $\omega_{-1}^N(y)=2^N\cdot y\!<\!\sqrt{y}\cdot y\!\leqslant\!(y-1)^2\!<\!x$. Let us note that  $\sqrt{y}\cdot y\!\leqslant\!(y-1)^2$ holds for any $y\!\geqslant\!4$ and we have $y\!>\!2^{2N+2}\!>\!4$.

For $m=0$, we use the above argument for $\log x$, noting that $\log x\!>\!\exp(4N+4)$ holds by the assumption $x\!>\!\exp^2(4N+4)$. There must exist some $z$ such that $2^N\cdot z\!<\!\log x\!\leqslant\!z^2$. Let $y=\exp(z)$, so $z=\log y$. Thus from $2^N\log y\!<\!\log x\!\leqslant\!(\log y)^2$ it follows that $\omega_0^N(y)=y^{\exp(N)}\!\leqslant\!\exp[\exp(N)\cdot(\log y)]\!\leqslant\!\exp(\log x -1)\!<\!x\!\leqslant\!\exp(\log x)\!\leqslant\!\exp([\log y]^2)=\omega_1(y)$.

For $m\!\geqslant\!1$, we can use induction on $m$ with a  straightforward argument. For $x\!>\!\exp^{m+3}(4N+4)$, we have $\log x\!>\!\exp^{m+2}(4N+4)$, and so by the induction hypothesis there exists a $z$ such that the inequalities $\omega_m^N(z)\!<\!\log x\!\leqslant\!\omega_{m+1}(z)$ hold. Put $y=\exp(z)$, so we have $\omega_m^N(\log y)\!<\!\log x\!\leqslant\!\omega_{m+1}(\log y)$. Thus,

$\omega_{m+1}^N(y)=\exp(\omega_m^N(\log y))\!\leqslant\!\exp(\log x-1)\!<\!x\!\leqslant\!\exp(\log x)\!\leqslant\!\exp(\omega_{m+1}(\log y))=
\omega_{m+2}(y)$.
\end{proof}

Whence the hierarchy $\{{\rm I\Delta_0+\Omega_m}\}_{{\rm m}\!\geqslant\!1}$ is proper; in the sense that $$\circledast  \ \ \ \ \ \ \ {\rm I\Delta_0}\subsetneqq{\rm I\Delta_0+\Omega_1}\subsetneqq\cdots{\rm I\Delta_0+\Omega_n}\subsetneqq{\rm I\Delta_0+\Omega_{n+1}}\subsetneqq\cdots\subsetneqq{\rm I\Delta_0+\bigwedge\Omega_j}\subsetneqq{\rm I\Delta_0+Exp}.$$ The notation ${\rm I\Delta_0+\bigwedge\Omega_j}$ abbreviates $\bigcup_{\rm n\geqslant1}({\rm I\Delta_0+\Omega_n})$.
The class of $\Sigma_n-$formulas and $\Pi_n-$formulas are defined as follows:
$\Sigma_1-$formulas are equivalently in the form $\exists \overline{x}\theta(\overline{x})$, where $\theta\!\!\in\!\!{\rm \Delta_0}$, and $\Pi_1-$formulas are equivalently in the form $\forall \overline{x}\theta(\overline{x})$, for some $\theta\!\!\in\!\!{\rm \Delta_0}$. Then  $\Sigma_{n+1}-$formulas are equivalent to $\exists \overline{x}\varphi(\overline{x})$ for some $\varphi\!\!\in\!\!\Pi_n$, and $\Pi_{n+1}-$formulas are equivalent to $\forall \overline{x}\varphi(\overline{x})$ for some $\varphi\!\!\in\!\!\Sigma_n$. The above hierarchy is not $\Pi_2-$conservative, i.e., there exists a $\Pi_2-$formula (namely $\Omega_{m+1}$) which is provable in ${\rm I\Delta_0+\Omega_{m+1}}$ but not in ${\rm I\Delta_0+\Omega_{m}}$. Though, the (difficult) open problem here is the $\Pi_1-$conservativity of the hierarchy:
\begin{problem}
{\rm Is there a ${\rm\Pi_1}-$sentence $\psi$ such that ${\rm I\Delta_0+\Omega_{m+1}}\vdash\psi$ and ${\rm I\Delta_0+\Omega_{m}}\not\vdash\psi$?
\hfill $\subset\!\!\!\!\supset$
}\end{problem}
As for the above hierarchy $\circledast$ it is (only) known that ${\rm I\Delta_0+Exp}$ is not $\Pi_1-$conservative over ${\rm I\Delta_0+\bigwedge\Omega_j}$ (see \cite{HP98}, Corollary 5.34 and the afterward explanation).

Examples of $\Pi_1-$separation   abound in mathematics and logic: Zermelo-Frankel Set Theory ${\rm ZFC}$ is not $\Pi_1-$conservative over Peano's Arithmetic ${\rm PA}$, because we have ${\rm ZFC}\vdash {\rm Con}({\rm PA})$ but, by G\"odel's Second Incompleteness Theorem,
${\rm PA}\not\vdash {\rm Con}({\rm PA})$; where ${\rm Con}(-)$ is the consistency predicate. Inside ${\rm PA}$ the $\Sigma_n-$hierarchy is not a $\Pi_1-$conservative hierarchy, since ${\rm I\Sigma_{n+1}}\vdash{\rm Con}({\rm I\Sigma_n})$ though ${\rm I\Sigma_{n}}\not\vdash{\rm Con}({\rm I\Sigma_n})$; see e.g. \cite{HP98}. Then below the  theory ${\rm I\Sigma_1}$ things get more complicated: for $\Pi_1-$separating ${\rm I\Delta_0}+{\rm Exp}$ over ${\rm I\Delta_0}$ the candidate ${\rm Con}({\rm I\Delta_0})$ does not work as expected, because ${\rm I\Delta_0}+{\rm Exp}\not\vdash{\rm Con}({\rm I\Delta_0})$ (see \cite{HP98} Corollary~5.29). For this $\Pi_1-$separation, Paris and Wilkie \cite{PaWi81} suggested the notion of cut-free consistency instead of the usual - Hilbert style - consistency predicate. Here one can show that ${\rm I\Delta_0}+{\rm Exp}\vdash{\rm CFCon}({\rm I\Delta_0})$, and then it was presumed that ${\rm I\Delta_0}\not\vdash {\rm CFCon}({\rm I\Delta_0})$, where ${\rm CFCon}$ stands for cut-free consistency. In 2006, L. A. Ko\l odziejczyk \cite{Kol06} showed that the notion of Herbrand Consistency (and thus, more probably, other Cut-Free consistencies, like Tableaux etc.) will not work for $\Pi_1-$separating the hierarchy above ${\rm I\Delta_0+\Omega_1}$ either. Namely, ${\rm I\Delta_0+\bigwedge\Omega_j}\not\vdash{\rm HCon}({\rm I\Delta_0+\Omega_1})$, where ${\rm HCon}(-)$ is the predicate of Herbrand Consistency (see subsection 2.3). In this paper, we extend this rather negative result one step further, by proving ${\rm I\Delta_0+\bigwedge\Omega_j}\not\vdash{\rm HCon}({\rm I\Delta_0})$.
\subsection{Herbrand Consistency in Bounded Arithmetics}
For a theory $T$, when $\Lambda$ is the set of all terms
(constructed from the function symbols of the language of $T$ and
also the Skolem function symbols of the formulas of $T$) any
$T-$evaluation on $\Lambda$ induces a model of $T$, which is called
a {\em Herbrand Model}.
Let ${\mathcal L}$ be a language  and $\Lambda$ be a
 set of (ground) terms (constructed by the Skolem constant and function symbols of ${\mathcal L}$).

 Put $\Lambda^{\langle 0\rangle}=\Lambda$, and define inductively

$\Lambda^{\langle k+1\rangle}=\Lambda^{\langle k\rangle}\cup
\{f(t_1,\ldots,t_m)\mid f\!\in\!{\mathcal L}
\;\&\;t_1,\ldots,t_m\!\in\!\Lambda^{\langle k\rangle}\}$

\hspace{5.9em}  $\cup  \,\{{\mathfrak f}_{\exists
x\psi(x)}(t_1,\ldots,t_m)\mid \ulcorner\psi\urcorner\leqslant k \;\&\;
t_1,\ldots,t_m\!\in\!\Lambda^{\langle k\rangle}\}$.

 Let $\Lambda^{\langle \infty\rangle}$ denote the union
$\bigcup_{k\in\mathbb{N}}\Lambda^{\langle k\rangle}$.

\noindent Suppose $p$ is an evaluation on $\Lambda^{\langle \infty\rangle}$.
Define ${\textswab M}(\Lambda,p)=\{t/p\mid t\!\in\!\Lambda^{\langle
\infty\rangle}\}$ and put the ${\mathcal L}-$structure on it by

$(1) \ f^{{\textswab M}(\Lambda,p)}(t_1/p,\ldots,t_m/p)=f(t_1,\ldots,t_m)/p$, and

$(2) \ R^{{\textswab M}(\Lambda,p)}=\{(t_1/p,\ldots,t_m/p)\mid p\models
R(t_1,\ldots,t_m)\}$,

\noindent for function symbol $f$, relation symbols $R$, and terms
$t_1,\ldots,t_m\!\in\!\Lambda^{\langle\infty\rangle}$.
\begin{lemma}\label{lm:mlambdap}
{\rm The definition of ${\mathcal L}-$structure on ${\textswab M}(\Lambda,p)$ is
well-defined, and when $p$ is an $T-$evaluation on
$\Lambda^{\langle\infty\rangle}$, for an ${\mathcal L}-$theory $T$, then
${\textswab M}(\Lambda,p)\models T$. }\end{lemma}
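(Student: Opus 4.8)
The plan is to dispose of well-definedness first and then establish ${\textswab M}(\Lambda,p)\models T$ by a single structural induction that links satisfaction in ${\textswab M}(\Lambda,p)$ to propositional satisfaction of quantifier-free Skolem instances by $p$.

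Well-definedness of the clauses (1) and (2) is immediate from the fact, already recorded above, that $\backsim_p$ is a congruence. If $t_i\backsim_p s_i$ for $i=1,\dots,m$, then $f(t_1,\dots,t_m)\backsim_p f(s_1,\dots,s_m)$, so clause (1) does not depend on the chosen representatives; and iterating condition (2) of the definition of an evaluation gives $p\models R(t_1,\dots,t_m)\iff p\models R(s_1,\dots,s_m)$, so clause (2) is well-defined as well. Here one uses that $\Lambda^{\langle\infty\rangle}$ is closed under every function symbol and every Skolem function symbol, so that $f(t_1,\dots,t_m)$ again lies in $\Lambda^{\langle\infty\rangle}$ and the clauses make sense. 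A routine term induction using clause (1) then shows that any ground term $t\!\in\!\Lambda^{\langle\infty\rangle}$ satisfies $t^{{\textswab M}(\Lambda,p)}=t/p$, and more generally that a term $\tau(\overline z)$ evaluated under the assignment $z_i\mapsto t_i/p$ takes the value $\tau(\overline t)/p$.

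The core of the argument is the following claim, proved by induction on the build-up of an RNNF formula. Fix $\varphi\!\in\! T$, rewrite it in RNNF, and let $\chi(\overline z)$ range over its subformulas. Claim: for all $\overline t\!\in\!\Lambda^{\langle\infty\rangle}$, if $p$ satisfies every instance of $\chi^{\mathsf S}[\overline t/\overline z]$ available in $\Lambda^{\langle\infty\rangle}$ (that is, every quantifier-free formula obtained by further substituting terms of $\Lambda^{\langle\infty\rangle}$ for the variables left universal in $\chi^{\mathsf S}$), then ${\textswab M}(\Lambda,p)\models\chi$ under $z_i\mapsto t_i/p$. The atomic and negated-atomic cases are exactly clause (2) together with the term computation above; the conjunction case splits an instance of $\chi_1^{\mathsf S}\wedge\chi_2^{\mathsf S}$ into instances of the two conjuncts and applies the induction hypothesis; the universal case uses that every element of ${\textswab M}(\Lambda,p)$ is of the form $s/p$ with $s\!\in\!\Lambda^{\langle\infty\rangle}$, so that satisfaction of all instances of $\forall x\,\psi^{\mathsf S}$ feeds the induction hypothesis for $\psi$ at every witness $s/p$. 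Applying the claim to $\chi=\varphi$ (a sentence, so $\overline z$ is empty) and recalling that $p$, being a $T$-evaluation, satisfies all Skolem instances of $\varphi$ available in $\Lambda^{\langle\infty\rangle}$, yields ${\textswab M}(\Lambda,p)\models\varphi$, hence ${\textswab M}(\Lambda,p)\models T$.

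I expect the two genuinely delicate cases to be the disjunction and the existential quantifier. For $\chi=\chi_1\vee\chi_2$ the natural route is contrapositive: since $\varphi$ is rectified, the universal variables of $\chi_1^{\mathsf S}$ and of $\chi_2^{\mathsf S}$ are disjoint, so if some available instance of $\chi_1^{\mathsf S}[\overline t/\overline z]$ and some available instance of $\chi_2^{\mathsf S}[\overline t/\overline z]$ both failed under $p$, these could be amalgamated into a single available instance of $(\chi_1\vee\chi_2)^{\mathsf S}[\overline t/\overline z]$ failing under $p$; thus $p$ must satisfy all instances of one disjunct, and the induction hypothesis applies. For $\chi=\exists x\,\psi(x,\overline z)$ the point is that $\chi^{\mathsf S}=\psi^{\mathsf S}[{\mathfrak f}_{\exists x\psi}(\overline z)/x]$, so the hypothesis on the instances of $\chi^{\mathsf S}[\overline t/\overline z]$ is precisely the hypothesis of the induction hypothesis for $\psi$ with $x$ assigned the value ${\mathfrak f}_{\exists x\psi}(\overline t)/p$; since $\Lambda^{\langle\infty\rangle}$ is closed under the Skolem function ${\mathfrak f}_{\exists x\psi}$, this value is a bona fide element of ${\textswab M}(\Lambda,p)$ and serves as the required witness, giving ${\textswab M}(\Lambda,p)\models\exists x\,\psi$. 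This closure of $\Lambda^{\langle\infty\rangle}$ under Skolem functions, built into the definition of $\Lambda^{\langle k+1\rangle}$, is exactly what makes the existential step — and therefore the whole lemma — go through.
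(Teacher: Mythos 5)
Your proof is correct and follows essentially the same route as the paper's: well-definedness is read off from the congruence property of $\backsim_p$, and the model relation ${\textswab M}(\Lambda,p)\models T$ is obtained by induction on the complexity of RNNF formulas, showing that satisfaction of all available Skolem instances by $p$ forces satisfaction in ${\textswab M}(\Lambda,p)$. The paper merely asserts this induction "can be shown," whereas you supply the details it omits — notably the amalgamation argument for disjunctions (using rectification) and the use of closure under Skolem functions in the existential step — so your write-up is a faithful elaboration rather than a different proof.
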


\begin{proof} That the definitions of  $f^{{\textswab M}(\Lambda,p)}$ and $R^{{\textswab M}(\Lambda,p)}$ are well-defined follows directly from  the definition of an evaluation. By the definition of $\Lambda^{\langle\infty\rangle}$ the structure ${\textswab M}(\Lambda,p)$ is closed under all the Skolem functions of ${\mathcal L}$, and moreover it satisfies an atomic (or negated atomic) formula
$A(t_1/p,\ldots,t_m/p)$ if and only if $p\models A(t_1,\ldots,t_m)$.
Then it can be shown, by induction on the complexity of formulas,
that for every RNNF formula $\psi$, we have ${\textswab M}(\Lambda,p)\models\psi$ whenever $p$ satisfies all the available
Skolem instances of $\psi$ in $\Lambda^{\langle\infty\rangle}$. Whence, if $p$ is a $T-$evaluation, then we have ${\textswab M}(\Lambda,p)\models T$.
\end{proof}

For arithmetizing the notion of Herbrand Consistency, we adopt an efficient G\"odel coding, introduced e.g. in Chapter~V of \cite{HP98}. For
convenience, and shortening the computations, we introduce the ${\mathcal P}$ notation: We say $x$ is of ${\mathcal P}(y)$, when  the code of $x$ is bounded  above by a polynomial of $y$; and we write this as $\ulcorner x\urcorner\leqslant{\mathcal P}(y)$, meaning that for some $n$ the inequality $\ulcorner x\urcorner\leqslant y^n+n$ holds. Let us note that $X\leqslant{\mathcal P}(Y)$ is equivalent to the old (more familiar) $O-$notation $``\log  X\!
\in\!{\mathcal O}(\log Y)"$. Here we collect some very basic facts about this fixed efficient coding  that will be needed later.

\begin{remark} {\rm Let $A,B$ be  sets of terms and let $|A|,|B|$ denote their cardinality. Then

$\bullet \ \ulcorner A\cup B\urcorner
\leqslant 64\cdot(\ulcorner A\urcorner\cdot \ulcorner B\urcorner)$ (Proposition 3.29 page 311  of \cite{HP98}); and

$\bullet \ \left(|A|\right)\leqslant (\log \ \ulcorner A\urcorner)$ (
 Section (e) pages 304--310  of \cite{HP98});
 \hfill $\subset\!\!\!\!\supset$
}\end{remark}
Let $\mathcal{L}_A=\langle0,{\mathfrak s},+,\cdot,\leqslant\rangle$ be the language of arithmetics (see Example \ref{example2}). If we let ${\mathcal L}_A^{\rm Sk}$ to be the closure of ${\mathcal L}_A$ under Skolem function and constant symbols, i.e., let ${\mathcal L}_A^{\rm Sk}$  be the smallest set that contains ${\mathcal L}_A$ and for any ${\mathcal L}_A^{\rm Sk}-$formula $\exists x \phi(x)$ we have ${\mathfrak f}_{\exists x \phi(x)}\!\in\!{\mathcal L}_A^{\rm Sk}$, then
  this new countable language can also be re-coded, and this recoding can be generalized to ${\mathcal L}_A^{\rm Sk}-$terms and ${\mathcal L}_A^{\rm Sk}-$formulas.
We wish to compute an upper bound for the codes of evaluations on a
set of terms $\Lambda$. For a given $\Lambda$, all the atomic
formulas, in the language ${\mathcal L}_A$, constructed from
terms of $\Lambda$ are either of the form $t=s$ or of the form
$t\leqslant s$ for some $t,s\!\in\!\Lambda$. And every member of an
evaluation $p$ on $\Lambda$ is an ordered pair like $\langle
t=s,i\rangle$ or $\langle t\leqslant s,i\rangle$ for some $t,s\!\in\!\Lambda$
and $i\!\in\!\{0,1\}$. Thus the code of any member of $p$ is a constant multiple of
$(\ulcorner t\urcorner\cdot \ulcorner s\urcorner)^2$, and so the code of $p$ is bounded above by
${\mathcal P}(\prod_{t,s\in\Lambda}\ulcorner t\urcorner\cdot \ulcorner s\urcorner)$.
Let us also note that $\prod_{t,s\in\Lambda}\ulcorner t\urcorner\cdot
\ulcorner s\urcorner=\prod_{t\in\Lambda}(\ulcorner t\urcorner)^{2|\Lambda|}=
(\prod_{t\in\Lambda}\ulcorner t\urcorner)^{2|\Lambda|}\leqslant{\mathcal P}(\ulcorner\Lambda\urcorner)
^{2\log\ulcorner\Lambda\urcorner}\leqslant{\mathcal P}(\ulcorner\Lambda\urcorner^{\log\ulcorner\Lambda\urcorner})\leqslant
{\mathcal P}(\omega_1(\ulcorner\Lambda\urcorner))$. Thus we have $\ulcorner p\urcorner \leqslant{\mathcal P}\left(\omega_1(\ulcorner\Lambda\urcorner)\right)$ for any evaluation $p$ on any set of terms $\Lambda$. As noted in \cite{Sal10} there are $\exp(2|\Lambda|^2)$ different evaluations on the set $\Lambda$, and by $|\Lambda|\leqslant \log\ulcorner\Lambda\urcorner$ we get  $\exp(2|\Lambda|^2)\leqslant {\mathcal P}\left(\exp((\log\ulcorner\Lambda\urcorner)^2)\right)
\leqslant{\mathcal P}\left(\omega_1(\ulcorner\Lambda\urcorner)\right)$. So, only when  $\omega_1(\ulcorner\Lambda\urcorner)$ exists, can we have all the evaluations on $\Lambda$ in our disposal.
We need an  upper bound on the size (cardinal) and the code of $\Lambda^{\langle j\rangle}$ defined above.
\begin{theorem}\label{log4}
{\rm If for a set of terms $\Lambda$ with non-standard
$\ulcorner\Lambda\urcorner$ the value
$\omega_2(\ulcorner\Lambda\urcorner)$ exists, then for a non-standard
$j$ the value $\ulcorner\Lambda^{\langle j\rangle}\urcorner$ will
exist.
}\end{theorem}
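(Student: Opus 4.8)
The plan is to track, as functions of the step $k$, both the cardinality $n_k=|\Lambda^{\langle k\rangle}|$ and the code $c_k=\ulcorner\Lambda^{\langle k\rangle}\urcorner$ of the successive approximations, to set up a one-step recursion for $c_k$, to iterate it a non-standard number of times, and to verify that the outcome stays below $\omega_2(\lambda)$, where $\lambda=\ulcorner\Lambda\urcorner$. Writing $\lambda=\ulcorner\Lambda\urcorner$, the hypotheses give $c_0=\lambda$ and, by the Remark, $n_0\leqslant\log\lambda$; note also that every term of $\Lambda^{\langle k\rangle}$ has code $\leqslant c_k$, so the largest term code at stage $k$ is at most $c_k$.

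First I would bound a single step. The terms adjoined in passing from $\Lambda^{\langle k\rangle}$ to $\Lambda^{\langle k+1\rangle}$ are the images under (i) the four basic function symbols of $\mathcal L_A$, of arity $\leqslant 2$, and (ii) the Skolem symbols $\mathfrak f_{\exists x\psi}$ with $\ulcorner\psi\urcorner\leqslant k$; there are at most $k$ of the latter, each of arity equal to its number of free variables, hence $\leqslant\log k$ (the length of a formula is logarithmic in its code under the efficient coding). Consequently $n_{k+1}\leqslant (k+4)\,n_k^{\log k}$, and each new term has code at most a constant times $k\cdot c_k^{\log k}$. Feeding these into the two displayed inequalities of the Remark ($\ulcorner A\cup B\urcorner\leqslant 64\,\ulcorner A\urcorner\ulcorner B\urcorner$, and the product bound $\leqslant 64^{|\cdot|}\prod\ulcorner t_i\urcorner$ for a finite set) yields a recursion for $c_k$ which, after passing to $g_k=\log\log c_k$, collapses to the clean multiplicative bound $g_{k+1}\leqslant(\log k+2)\,g_k$. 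This is valid throughout the range that interests us, because $g_0=\log\log\lambda$ is non-standard and therefore dominates the additive coding overhead $\log\bigl(4(k+4)\log k\bigr)$ at every such step.

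Then I would iterate, obtaining $g_j\leqslant g_0\prod_{k<j}(\log k+2)$. Since $\lambda$ is non-standard, so are all of its finite iterated logarithms, and a direct estimate shows $\prod_{k<j}(\log k+2)\leqslant g_0$ already for $j=\log^4\lambda$ (whence the label of the theorem). For this non-standard $j$ one gets $g_j\leqslant g_0^{2}=(\log\log\lambda)^2$, i.e.\ $\ulcorner\Lambda^{\langle j\rangle}\urcorner\leqslant\exp\exp\bigl((\log\log\lambda)^2\bigr)=\omega_2(\lambda)$. As $\omega_2(\lambda)$ exists by hypothesis and bounds every $c_k$ for $k\leqslant j$, the entire finite construction remains inside the model and $\ulcorner\Lambda^{\langle j\rangle}\urcorner$ exists.

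The main obstacle is the bookkeeping around the Skolem contribution together with the internal formalization. One must check carefully that the number of available Skolem symbols ($\leqslant k$) and their arities ($\leqslant\log k$) enter the recursion only through the mild factor $(\log k+2)$ after two logarithms, and that the accumulated coding constants (the factors of $64$, the bounds $\ulcorner\mathfrak f\urcorner\leqslant k$) never overtake the non-standard quantity $g_k$ across the whole non-standard range $k<j$. A secondary, genuinely internal point is that to conclude existence one needs the finite sequence $\langle\Lambda^{\langle k\rangle}\rangle_{k\leqslant j}$ itself — not merely its final bound — to be available in the model, which follows by bounded induction once every entry has been pinned below $\omega_2(\lambda)$.
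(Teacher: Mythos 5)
Your proof is correct and follows essentially the same route as the paper's: both set up a one-step recursion for the cardinality and the code of $\Lambda^{\langle k\rangle}$, iterate it, and pick a non-standard $j\leqslant\log^4\ulcorner\Lambda\urcorner$ so that the accumulated bound stays below $\omega_2(\ulcorner\Lambda\urcorner)$. The only difference is bookkeeping: the paper carries the factorial bound $\ulcorner\Lambda^{\langle m\rangle}\urcorner\leqslant{\mathcal P}\bigl(\ulcorner\Lambda\urcorner^{|\Lambda|^{(m+1)!}}\bigr)$ using the cruder arity bound $k$ for the Skolem symbols, whereas you pass to $\log\log$ and obtain the slightly sharper product $\prod_{k<j}(\log k+2)$ --- both estimates close for the same choice of $j$.
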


\begin{proof}
We first show that the following inequalities hold when $\ulcorner\Lambda\urcorner$ and
$|\Lambda|$ are sufficiently larger than $n$:
 (1)  $|\Lambda^{\langle n\rangle}|\leqslant{\mathcal P}\left( |\Lambda|^{n!}\right)$, and
 (2)   $\ulcorner\Lambda^{\langle n\rangle}\urcorner\leqslant
{\mathcal P}\Big(\big(\ulcorner\Lambda\urcorner\big)^{|\Lambda|^{(n+1)!}}\Big)$.

 \noindent Denote $\ulcorner\Lambda^{\langle k\rangle}\urcorner$ by $\lambda_k$ (thus $\ulcorner\Lambda\urcorner=\lambda_0=\lambda$) and $|\Lambda^{\langle k\rangle}|$ by $\sigma_k$ (and thus $|\Lambda|=\sigma_0=\sigma$). We first note that $\sigma_{k+1}\leqslant \sigma_k+M\sigma_k^M+k\sigma_k^k$ for a fixed $M$. Thus $\sigma_{k+1}\leqslant{\mathcal P}(\sigma_k^{k+1})$, and then, by an inductive argument, we have $\sigma_n\leqslant{\mathcal P}(\sigma^{n!})$. For the second statement, we first compute an upper bound for the code of the Cartesian power $A^m$ for a set $A$. Now we have $\ulcorner A^{k+1}\urcorner\leqslant {\mathcal P}\big(\prod_{t\in A^k  \&  s\in A}\ulcorner t\urcorner \cdot \ulcorner s\urcorner\big)\leqslant {\mathcal P}\big( \ulcorner A^k\urcorner^{|A|}\cdot\ulcorner A\urcorner^{|A|^k}\big)$, and thus $\ulcorner A^m\urcorner \leqslant {\mathcal P}\big(\ulcorner A\urcorner ^{|A|^m}\big)$ can be shown by induction on $m$. Now we have $\lambda_{k+1}\leqslant{\mathcal P}\big(\ulcorner \Lambda^{\langle k\rangle}\urcorner\cdot\ulcorner (\Lambda^{\langle k\rangle})^M\urcorner\cdot\ulcorner (\Lambda^{\langle k\rangle})^k\urcorner\big)$ for a fixed $M$. So, $\lambda_{k+1}\leqslant {\mathcal P}\big(\lambda_k^{{\sigma_k}^{k}}\big)$ and finally our desired conclusion $\lambda_m\leqslant{\mathcal P}\big(\lambda^{\sigma^{(m+1)!}}\big)$ follows by induction.

Now since $\ulcorner\Lambda\urcorner$ is a non-standard number, there must exist a non-standard $j$ such that $j\!\leqslant\!\log^4(
\ulcorner\Lambda\urcorner)$. Thus $2(j+1)!\leqslant
\exp^2(j)\!\leqslant\!\log^2(\ulcorner\Lambda\urcorner)$. Now, by  the inequality (2) above  we can write

$\ulcorner\Lambda^{\langle
j\rangle}\urcorner\!\leqslant\!{\mathcal P}\left((\ulcorner\Lambda\urcorner)^{|\Lambda|^{(j+1)!}}\right)
\!\leqslant\!{\mathcal P}
\left((2^{2\log\ulcorner\Lambda\urcorner})^{(\log\ulcorner\Lambda\urcorner)^{
(j+1)!}}\right)\!\leqslant\!{\mathcal P}\left(\exp((\log\ulcorner\Lambda\urcorner)^{2(j+1)!})\right)\!\leqslant\!{\mathcal P}\left(\exp(\omega_1(\log\ulcorner\Lambda\urcorner))\right)$, and so
$\ulcorner\Lambda^{\langle
j\rangle}\urcorner\leqslant{\mathcal P}\left(\omega_2(\ulcorner\Lambda\urcorner)\right)$.
\end{proof}

The reason that Theorem \ref{log4} is stated for non-standard
$\ulcorner\Lambda\urcorner$ is that the set $\Lambda^{\langle\infty\rangle}$, needed
for constructing the model ${\textswab M}(\Lambda,p)$,  is not
definable in ${\mathcal L}_A$. But the existence of the definable
$\Lambda^{\langle j\rangle}$ for a non-standard $j$ can guarantee
the existence of $\Lambda^{\langle\infty\rangle}$ and thus of
${\textswab M}(\Lambda,p)$. This non-standard $j$ exists for
non-standard $\ulcorner\Lambda\urcorner$.
The above Theorem \ref{log4} suggest the following formalization of the notion of Herbrand Consistency:
\begin{definition}
{\rm A theory $T$ is called {\em Herbrand Consistent} if for any set of terms $\Lambda$ (constructed from the Skolem terms of $T$) for which $\omega_2(\ulcorner\Lambda\urcorner)$ exists, there is a $T-$evaluation on $\Lambda$.

\noindent This notion can be formalized in the language of arithmetic, denoted by ${\rm HCon}(T)$. \hfill $\subset\!\!\!\!\supset$
}\end{definition}

\section{Separating Bounded Arithmetical Hierarchy}
\subsection{Separating by Herbrand Consistency}
Let us recall that the (usual) Hilbert Provability $T\vdash\varphi$ is, by definition, the existence of a sequence of formulas whose last element is (the G\"odel code of) $\varphi$ and every other element is either a logical axiom or an axiom of $T$, or has been resulted from two previous elements by means of model ponens.  Thus Hilbert Consistency means the non-existence of  such a sequence whose last element is a contradiction. Let us note that Herbrand Consistency is, in a sense,  a weaker notion of consistency; some more explanation is in order.  The super-exponentiation  function is  defined by  ${\rm sup_-exp}(x)=\exp^x(x)$; let ${\rm Sup_-Exp}$ be the sentence which expresses the totality of this function (${\rm Sup_-Exp} = \forall x\exists y [y={\rm sup_-exp}(x)]$).  By the techniques of cut elimination (see e.g. \cite{HP98}) it can be shown that ${\rm I\Delta_0+Sup_-Exp}\vdash {\rm Con}(T)\leftrightarrow{\rm HCon}(T)$ for any theory $T$. Though the theory ${\rm I\Delta_0+Exp}$ it too weak to recognize this equivalence, since ${\rm I\Delta_0+Exp}\vdash{\rm HCon}({\rm Q})$  but ${\rm I\Delta_0+Exp}\not\vdash{\rm Con}({\rm Q})$ (\cite{HP98}, Theorem~5.20 and Corollary~5.29). So,  ${\rm I\Delta_0+Exp}\not\vdash {\rm HCon}(T)\rightarrow{\rm Con}(T)$ in general, though it can be shown  that ${\rm I\Delta_0+Exp}\vdash {\rm Con}(T)\rightarrow{\rm HCon}(T)$ (see \cite{HP98}).
Thus showing the unprovability of Herbrand Consistency of weak theories in themselves is an interesting generalization of G\"odel's Second Incompleteness Theorem. What we are interested in here, is whether the notion of Herbrand Consistency can $\Pi_1-$separate the  hierarchy $\circledast$ above. We already know (only) that ${\rm I\Delta_0+Exp}$ is not $\Pi_1-$conservative over ${\rm I\Delta_0+\bigwedge\Omega_j}$, but we do not yet know whether  ${\rm I\Delta_0+Exp}$ is able to derive the Herbrand Consistency of the theory ${\rm I\Delta_0+\bigwedge\Omega_j}$ or not.
\begin{conjecture}
{\rm The notion of Herbrand Consistency cannot $\Pi_1-$separate the (already $\Pi_1-$distinct) theories ${\rm I\Delta_0+Exp}$ and ${\rm I\Delta_0+\bigwedge\Omega_j}$; that is  ${\rm I\Delta_0+Exp}\not\vdash{\rm HCon}({\rm I\Delta_0+\bigwedge\Omega_j})$.\hfill $\subset\!\!\!\!\supset$
}\end{conjecture}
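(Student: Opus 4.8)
The plan is to argue by contradiction inside a model where super-exponentiation fails. First I would note that ${\rm I\Delta_0+Exp}\not\vdash{\rm Sup_-Exp}$: if it did, then from ${\rm I\Delta_0+Sup_-Exp}\vdash{\rm Con}(T)\leftrightarrow{\rm HCon}(T)$ together with the known ${\rm I\Delta_0+Exp}\vdash{\rm HCon}({\rm Q})$ we would obtain ${\rm I\Delta_0+Exp}\vdash{\rm Con}({\rm Q})$, contradicting Corollary~5.29 of \cite{HP98}. Hence I may fix a model $M\models{\rm I\Delta_0+Exp}$ with $M\not\models{\rm Sup_-Exp}$, witnessed by some non-standard $a\in M$ for which the tower $\exp^a(2)$ overflows $M$, while $\exp^k$ stays total for every standard $k$.

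Assume toward a contradiction that ${\rm I\Delta_0+Exp}\vdash{\rm HCon}({\rm I\Delta_0+\bigwedge\Omega_j})$, so that $M$ satisfies this statement. The heart of the argument is to manufacture inside $M$ a coded set $\Lambda$ of Skolem terms over the language of ${\rm I\Delta_0+\bigwedge\Omega_j}$ with $\ulcorner\Lambda\urcorner$ non-standard and $\omega_2(\ulcorner\Lambda\urcorner)$ existing --- so that ${\rm HCon}$ is forced to supply an $({\rm I\Delta_0+\bigwedge\Omega_j})$-evaluation $p$ on $\Lambda$ --- yet for which no such $p$ can live in $M$. Concretely I would begin with a single non-standard generator, close it under the Skolem functions $\mathfrak{f}_n$ of the axioms $\Omega_n$ up to a non-standard depth $j\leqslant\log^4(\ulcorner\Lambda\urcorner)$ (legitimate by Theorem~\ref{log4}, as $\omega_2(\ulcorner\Lambda\urcorner)$ exists), and lay out the available instances of the $\Delta_0$-graphs $\phi_n(x,\mathfrak{f}_n(x))$ of $\omega_n$ so that any evaluation would have to realize $\omega_n$ at some term for a non-standard $n\leqslant a$.

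Granting such a $p$, the reasoning of Lemma~\ref{lm:mlambdap} turns the $p$-classes on $\Lambda$ into a structure verifying every available $\Omega_n$-instance. I would then unwind the witness for $\phi_n(t,\mathfrak{f}_n(t))$ into an internally coded iterated-exponential sequence of non-standard length $n$, and invoke the domination estimates of Lemmas~\ref{lem1} and~\ref{lem2} --- the inequalities $\omega_m^N<\omega_{m+1}$ and their inverses --- to descend across the $\omega$-levels and splice these partial witnesses into one tower $\exp^n(\cdot)$ whose top lies in $M$. Since $n$ is non-standard with $n\leqslant a$, this contradicts $M\not\models{\rm Sup_-Exp}$; the contradiction refutes the assumption and yields ${\rm I\Delta_0+Exp}\not\vdash{\rm HCon}({\rm I\Delta_0+\bigwedge\Omega_j})$.

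The main obstacle will be the construction of $\Lambda$: it must be small enough that $\omega_2(\ulcorner\Lambda\urcorner)$ exists --- so that ${\rm HCon}$ applies and hands over an evaluation --- and at the same time semantically so demanding that completing any evaluation encodes a tower of non-standard height. This tension is genuine, since forcing $\omega_n(t)$ to be super-exponential for a non-standard $n$ naively requires $t\geqslant\exp^n(2)$, which already overflows $M$; a single instance therefore cannot suffice. I expect the resolution to require chaining the non-standardly-many available instances across different $\omega$-levels through the descent of Lemma~\ref{lem2}, so that no individual term is large while the cumulative demand is a completed super-exponential. This is precisely where the conjecture outstrips the paper's theorem ${\rm I\Delta_0+\bigwedge\Omega_j}\not\vdash{\rm HCon}({\rm I\Delta_0})$: passing from ${\rm \bigwedge\Omega_j}$ up to ${\rm Exp}$ is a single qualitative jump --- one application of $\exp^2$ dominates every $\omega_n$ --- rather than one more $\Omega$-rung, so a mere logarithmic re-scaling of the earlier proof will not do and the term set must be engineered anew.
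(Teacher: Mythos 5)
You should first be aware that the statement you set out to prove is presented in the paper as an open \emph{conjecture}: the paper gives no proof of it, and the Remark following Theorem~\ref{exp} only explains why the natural argument for the \emph{opposite} direction (provability of ${\rm HCon}({\rm I\Delta_0+\bigwedge\Omega_j})$ in ${\rm I\Delta_0+Exp}$) breaks down. So there is nothing of the paper's to compare your argument against, and it has to stand on its own. It does not, and the failure is conceptual rather than a missing detail.

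The fatal step is the one where, granting an $({\rm I\Delta_0+\bigwedge\Omega_j})$-evaluation $p$ on $\Lambda$ inside $M$, you ``unwind the witness for $\phi_n(t,\mathfrak{f}_n(t))$'' and splice the pieces into a tower $\exp^n(\cdot)$ ``whose top lies in $M$'', contradicting $M\not\models{\rm Sup_-Exp}$. An evaluation is only a propositional $\{0,1\}$-assignment to the atomic formulas over $\Lambda$ respecting congruence and the available Skolem instances; the witnesses it supplies are $\backsim_p$-classes of terms living in the term structure ${\textswab M}(\Lambda,p)$, which is a \emph{different} model from $M$. Nothing forces a term whose $p$-class ``equals'' $\omega_n(t)$ to be realized by an element of $M$: the value of a term can overflow $M$ while its code stays small. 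This decoupling of code from value is precisely the mechanism the paper itself exploits in the opposite direction --- the terms ${\sf z}_i$ in the proof of Theorem~\ref{th:c} have codes of order $\exp(i)$ but ``values'' $\exp^2(i)$ that need not exist in the ambient model, and the whole point of Lemma~\ref{lm:mlambdap} is to harvest the large elements in the \emph{new} model $\mathcal{K}={\textswab M}(\Lambda,p)$ rather than in $\mathcal{M}$. Consequently the promised contradiction with the failure of ${\rm Sup_-Exp}$ in $M$ never materializes. Beyond this, the key object of your argument is never built: you state the tension between ``$\omega_2(\ulcorner\Lambda\urcorner)$ exists'' and ``any evaluation encodes a non-standard tower'' and say you \emph{expect} it to be resolvable, which is exactly the open problem restated; and you would also need to fix which arithmetized axiom set of the non-finitely-axiomatizable theory ${\rm I\Delta_0+\bigwedge\Omega_j}$ you use, since whether $\Omega_n$ for non-standard $n\leqslant a$ is an ``available axiom'' inside $M$ depends on that choice. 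As it stands, the proposal neither proves the conjecture nor isolates a genuinely new line of attack on it.
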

Though, for any $m\!\!\geqslant\!\!1$, Herbrand Consistency can $\Pi_1-$separate ${\rm I\Delta_0+Exp}$ from the theory ${\rm I\Delta_0+\Omega_m}$, and also from  ${\rm I\Delta_0}$. Since  already  ${\rm I\Delta_0+\Omega_m}\not\vdash{\rm HCon}({\rm I\Delta_0+\Omega_m})$ for any $m\!\!\geqslant\!\!1$ (see \cite{Ada02,Sal02}) and also the following theorem hold.
\begin{theorem}\label{exp}
{\rm For any $m\geqslant 1$ we have ${\rm I\Delta_0+Exp}\vdash{\rm HCon}({\rm I\Delta_0+\Omega_m})$.
}\end{theorem}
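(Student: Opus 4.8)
My plan is to reduce $\mathrm{HCon}(\mathrm{I\Delta_0+\Omega_m})$ to its definitional content and then build the required evaluation by hand inside $\mathrm{I\Delta_0+Exp}$. Unwinding the definition, it suffices to show, provably in $\mathrm{I\Delta_0+Exp}$, that for every set of terms $\Lambda$ with $\omega_2(\ulcorner\Lambda\urcorner)$ existing there is an $(\mathrm{I\Delta_0+\Omega_m})$-evaluation on $\Lambda$. Since each fixed $\omega_m$ is eventually dominated by $\exp$, the theory $\mathrm{I\Delta_0+Exp}$ proves $\Omega_m$, so the ambient universe is already a model of $\mathrm{I\Delta_0+\Omega_m}$; the whole difficulty is therefore not to produce a model but to read an evaluation off it without overspending the exponential resources at hand. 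Concretely, I would define by recursion on term structure a valuation $v$ on the terms of $\Lambda$, interpreting $0,\mathfrak{s},+,\cdot$ as themselves, the Skolem function of $\Omega_m$ by the genuine value $\omega_m$, the Skolem functions attached to the bounded quantifiers of a $\Delta_0$-induction instance by their least witnesses below the relevant bounding terms, and the ``counterexample'' Skolem constant of each induction axiom by the least actual counterexample located in a bounded search. One then sets $p[t=s]=1$ iff $v(t)=v(s)$ and $p[t\leqslant s]=1$ iff $v(t)\leqslant v(s)$.

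That $p$ is an evaluation --- reflexivity and the congruence property --- is immediate from the definition, and that it is an $(\mathrm{I\Delta_0+\Omega_m})$-evaluation follows because every Skolem instance available in $\Lambda$ receives under $p$ the same truth value it has in the ambient model under $v$, and there it holds: the bounded-quantifier existentials are witnessed below their bounds by construction, the instance of $\Omega_m$ is witnessed by the totality of $\omega_m$, and an induction instance is satisfied because, whenever some substituted instance of its matrix fails, the least-number principle produces a genuine counterexample for the Skolem constant to name. This is exactly the bookkeeping carried out in Example \ref{q}. Finally the code of $p$ exists, since any evaluation on $\Lambda$ has code bounded by $\mathcal{P}(\omega_1(\ulcorner\Lambda\urcorner))$ and $\omega_1(\ulcorner\Lambda\urcorner)$ exists as soon as $\omega_2(\ulcorner\Lambda\urcorner)$ does.

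The hard part, and the only place where the standardness of $m$ is essential, is to prove that this whole construction is feasible in $\mathrm{I\Delta_0+Exp}$: that $v$ is total on $\Lambda$, that a single feasible bound $B$ dominates all the term values, and that all the witness-searches can be confined below $B$. The danger is that nesting the Skolem function of $\Omega_m$ yields terms like $g(g(\cdots g(t)))$ whose values grow like iterates of $\omega_m$, and a careless estimate would demand a number of iterations of $\exp$ depending on the term, i.e.\ super-exponentiation, which is unavailable. The observation that rescues the argument is that the syntactic depth of any $t\in\Lambda$ is at most $|t|\leqslant\log\ulcorner\Lambda\urcorner$, so no branch of a term carries more than $\log\ulcorner\Lambda\urcorner$ applications of the fastest-growing symbol; feeding $N\leqslant\log\ulcorner\Lambda\urcorner$ (whence $\exp(N)\leqslant\ulcorner\Lambda\urcorner$) into the identity $\omega_m^{N}(x)=\exp^m\big([\log^m x]^{\exp(N)}\big)$ bounds every subterm value, and hence $B$, by a fixed finite iterate $\exp^{m+O(1)}$ of a polynomial in $\ulcorner\Lambda\urcorner$. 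Because $m$ is standard this is total under $\mathrm{Exp}$, so the recursion defining $v$ and the bounded witness-searches all go through; moreover any induction counterexample that is actually needed lies below a failing term value, hence below $B$, so its search is feasible too. I would isolate this estimate as a lemma in the spirit of Lemmas \ref{lem1} and \ref{lem2}, and I expect it to be the technical crux. It is also precisely the step that disintegrates for $\mathrm{I\Delta_0+\bigwedge\Omega_j}$: there the Skolem functions of all the $\Omega_j$ are present, a single $\Lambda$-term may nest $\omega_j$ for $j$ growing with $\ulcorner\Lambda\urcorner$, and the uniform fixed-iterate bound on term values collapses --- in agreement with the conjecture that $\mathrm{I\Delta_0+Exp}\not\vdash\mathrm{HCon}(\mathrm{I\Delta_0+\bigwedge\Omega_j})$.
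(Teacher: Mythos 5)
Your proposal is correct and follows essentially the same route as the paper: work in the ambient model of ${\rm I\Delta_0+Exp}$ (which already satisfies ${\rm I\Delta_0+\Omega_m}$), realize every term of $\Lambda$ as an actual element by bounding the nested applications of $\omega_m$ via $\omega_m^{N}(x)=\exp^m([\log^m x]^{\exp(N)})$ with $N$ controlled by $\ulcorner\Lambda\urcorner$ so that a fixed finite iterate of $\exp$ suffices, and then read the evaluation off genuine equality and order in the model. The only differences are bookkeeping: the paper counts iterations along a non-decreasing enumeration of $\Lambda$ (getting $u\leqslant(\ulcorner\Lambda\urcorner)^2$ and the bound $\exp^{m+1}(u\cdot t_0)$) where you count syntactic depth, and you spell out the interpretation of the induction Skolem symbols that the paper leaves implicit.
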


\begin{proof} Reason inside a model ${\cal M}\models{\rm I\Delta_0+Exp}$. For any set of terms $\Lambda\!\in\!{\cal M}$, assume it has been rearranged in a non-decreasing order $\Lambda=\{t_0, t_1, t_2, \cdots, t_j\}$.  Then for some $u_1, u_2, \cdots, u_j$ we have the inequalities $t_1\!\leqslant\!\omega_m^{u_1}(t_0), t_2\!\leqslant\!\omega_m^{u_2}(t_1), \cdots, t_j\!\leqslant\!\omega_m^{u_j}(t_{j-1})$. Let $u=\sum_iu_i$; then $t_i\!\leqslant\!\omega_m^{u}(t_0)$ for each $i\!\leqslant\!j$. On the other hand, $\omega_m^u(t_0)=\exp^m([\log^m(t_0)]^{\exp(u)})\!\leqslant\!\exp^{m+1}(u\cdot t_0)$; and since $u\!\leqslant\!(\ulcorner\Lambda\urcorner)^2$ and $\exp$ is available for all elements, then every term in $\Lambda$ has a realization inside ${\cal M}$. Denote the realization of $t_i$ by $t^{\cal M}_i$. Then the evaluation $p$ defined on $\Lambda$ by the putting

{$(1) \ p\models t_k=t_l$ if and only if $t_k^{\cal M}=t_l^{\cal M}$, and $(2) \ p\models t_k\!\leqslant\!t_l$ if and only if $t_k^{\cal M}\!\leqslant\!t_l^{\cal M}$,}

\noindent is an $({\rm I\Delta_0+\Omega_m})-$evaluation on $\Lambda$ (note that ${\cal M}\models{\rm I\Delta_0+\Omega_m}$).
\end{proof}
\begin{remark}
{\rm By the above proof it can also be shown that ${\rm I\Delta_0+Exp}\vdash{\rm HCon}({\rm I\Delta_0})$ and it is shown in \cite{Sal10} that ${\rm I\Delta_0}\not\vdash{\rm HCon}({\rm I\Delta_0})$. Thus ${\rm HCon}(-)$ can $\Pi_1-$separate ${\rm I\Delta_0+Exp}$ and ${\rm I\Delta_0}$ as well. \hfill $\subset\!\!\!\!\supset$
}\end{remark}
\begin{remark}
{\rm A reason that the proof of the above theorem cannot be applied for showing the presumably false deduction ${\rm I\Delta_0+Exp}\vdash{\rm HCon}({\rm I\Delta_0+\bigwedge\Omega_j})$ in the conjecture, is that for the set of terms $\Xi=\{v_0, v_1, \cdots, v_j\}$ defined by $v_0\!=\!4$ and $v_{i+1}\!=\!\omega_{i+1}(v_i)$ for each $i\!<\!j$, we have $v_j=\exp^j(4)$ (the equality $v_i=\exp^i(4)$ follows by induction on $i$). Thus  a model of ${\rm I\Delta_0+Exp}$ can contain a big $j$, and the set $\Xi$ above, for which $\exp^j(4)$ does not exist. So, some terms of $\Xi$ may not have a realization in the model; and a suitable evaluation could not be defined in it. Note that $\exp^j(4)$ is a super-exponential term and cannot be obtained by applying a finite number of  the exponential function.
\hfill $\subset\!\!\!\!\supset$
}\end{remark}
\subsection{Unprovability of Herbrand Consistency of ${\rm I\Delta_0}$ in ${\rm I\Delta_0+\bigwedge\Omega_j}$}
Here we show the unprovability of the Herbrand Consistency of ${\rm I\Delta_0}$ in ${\rm I\Delta_0+\bigwedge\Omega_j}$. The proof is by a technique of logarithmic shortening of bounded witnesses, introduced by Z. Adamowicz in \cite{Ada02}, and also employed in \cite{Kol06,Sal10}. The following is an outline of the proof. If ${\rm I\Delta_0+\bigwedge\Omega_j}\vdash{\rm HCon}({\rm I\Delta_0})$, then there is an $\textswab{m}\!\geqslant\!2$ such that
 \  {($\pitchfork$)  \ ${\rm I\Delta_0+\Omega_\textswab{m}}\vdash{\rm HCon}({\rm I\Delta_0})$.}
From now on fix this $\textswab{m}$.We first show that one cannot always logarithmically shorten the witness of a bounded formula inside ${\rm I\Delta_0+\Omega_\textswab{m}}$. Or in other words, for any cut (i.e., a definable initial segment) like $I$ and its logarithme $J=\{\log x\!\!\mid\!\!x\!\in\!I\}$, there exists a bounded formula $\eta(x)$ such that the theory $({\rm I\Delta_0+\Omega_\textswab{m}})+\exists x\!\!\in\!\!I\eta(x)$ is consistent, but the theory $({\rm I\Delta_0+\Omega_\textswab{m}})+\exists x\!\!\in\!\!J\eta(x)$ is not consistent; or in other words we have  ${\rm I\Delta_0+\Omega_\textswab{m}}\vdash\forall x\!\!\in\!\!J\neg\eta(x)$ and ${\rm I\Delta_0+\Omega_\textswab{m}}\not\vdash\forall x\!\!\in\!\!I\neg\eta(x)$. For a similar statement on ${\rm I\Delta_0+\Omega_1}$ see Theorem 5.36 of \cite{HP98}. Second
we show that, under the  assumption ($\pitchfork$) above, for any bounded  $\theta(x)$, if the theory $({\rm I\Delta_0+\Omega_\textswab{m}})+\exists x\!\!\in\!\!I\theta(x)$ is consistent, then so is  $({\rm I\Delta_0+\Omega_\textswab{m}})+\exists x\!\!\in\!\!J\theta(x)$. This immediately contradicts ($\pitchfork$). The first theorem is a classical result in the theory of bounded arithmetic, whic can be proved without using the assumption ($\pitchfork$). The second theorem uses the assumption ($\pitchfork$) to be able to logarithmically shorten a witness $a\!\in\!I\wedge\theta(a)$ for the formula $x\!\in\!I\wedge\theta(x)$ in a model ${\cal M}\models({\rm I\Delta_0+\Omega_\textswab{m}})+\exists x\!\!\in\!\!I\theta(x)$ by constructing a model ${\cal N}\models({\rm I\Delta_0+\Omega_\textswab{m}})+\exists x\!\!\in\!\!J\theta(x)$. And for that we will use the assumption ($\pitchfork$) to infer ${\cal M}\models{\rm HCon}({\rm I\Delta_0})$, which implies the existence of an ${\rm I\Delta_0}-$evaluation on any set of terms $\Lambda$ for which $\omega_2(\ulcorner\Lambda\urcorner)$ exists. That evaluation on a suitable $\Lambda$ will give us a model of ${\rm I\Delta_0}+\exists x\!\!\in\!\!J\theta(x)$ (see Lemma \ref{lm:mlambdap}). Then by a trick of \cite{Kol06} we will construct a model for the theory $({\rm I\Delta_0+\Omega_\textswab{m}})+\exists x\!\!\in\!\!J\theta(x)$. The suitable set of terms $\Lambda$ should contain a term for representing $a$ and all the polynomials (i.e., arithmetical terms) of $a$. Define the terms $\underline{j}$'s by induction: $\underline{0}=0$, and  $\underline{j+1}={\mathfrak s}(\underline{j})$. The term $\underline{j}$ represents the (standard or non-standard) number $j$. We require that $\Lambda\supseteq\{\underline{j}\!\mid\!j\!\leqslant\!\omega_1(a)\}=\digamma$. The code of $\digamma$ is bounded above by $\ulcorner\digamma\urcorner\leqslant{\mathcal P}\left(\prod_{j=0}^{j=\omega_1(\alpha)}2^j\right)
\!\leqslant\!{\mathcal P}\left(\exp(\omega_1(\alpha)^2)\right)$. And the value $\omega_2(\ulcorner\digamma\urcorner)$ is bounded above by $\omega_2(\ulcorner\digamma\urcorner)\!\leqslant\!{\mathcal P}\left(\omega_2(\exp(\omega_1(\alpha)^2))\right)
\!\leqslant\!{\mathcal P}\left(\exp(\omega_1(\omega_1(\alpha)^2))\right)
\!\leqslant\!{\mathcal P}\left(\exp^2\left(4(\log\alpha)^4\right)\right)$.
\begin{definition}
{\rm Let the cut ${\mathcal I}$ be defined by ${\mathcal I}=\{x\mid \exists y[y=\exp^2\big(4(\log\alpha)^4\big)]\}$ and its logarithm be ${\mathcal J}=\{x\mid \exists y [y=\exp^2\big(4\alpha^4\big)]\}$.
}\end{definition}
Note that $\forall x  [  \exp(x)\!\in\!{\mathcal I}\iff x\!\in\!{\mathcal J}  ]$. The two mentioned theorems are the following.
\begin{theorem}\label{th:a}
{\rm There exists a bounded formula $\eta(x)$ such that the theory $({\rm I\Delta_0+\Omega_\textswab{m}})+\exists x\!\!\in\!\!\mathcal{I}\eta(x)$ is consistent, but the theory $({\rm I\Delta_0+\Omega_\textswab{m}})+\exists x\!\!\in\!\!\mathcal{J}\eta(x)$ is not consistent
}\end{theorem}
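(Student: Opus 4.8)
The plan is to reduce the whole statement to the construction of a single bounded formula $\eta$ which, \emph{relative to the longer cut} $\mathcal{I}$, defines exactly the gap $\mathcal{I}\setminus\mathcal{J}$. Concretely, I aim to produce a bounded $\eta(x)$ for which ${\rm I\Delta_0+\Omega_\textswab{m}}\vdash\forall x\!\in\!\mathcal{I}\,[\eta(x)\leftrightarrow x\!\notin\!\mathcal{J}]$. Granting this, both halves are immediate. First, $\mathcal{J}\subseteq\mathcal{I}$ is provable in ${\rm I\Delta_0}$ (if $\exp^2(4x^4)$ exists then, since $\log x\!\leqslant\!x$ and $\exp^2$ is monotone, $\exp^2(4(\log x)^4)$ exists too), so any $\eta$-witness in $\mathcal{J}$ would also lie in $\mathcal{I}$ and hence satisfy $x\!\notin\!\mathcal{J}$, a contradiction; thus ${\rm I\Delta_0+\Omega_\textswab{m}}\vdash\forall x\!\in\!\mathcal{J}\,\neg\eta(x)$ and the $\mathcal{J}$-theory is inconsistent. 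Second, any element of the gap $\mathcal{I}\setminus\mathcal{J}$ is an $\eta$-witness in $\mathcal{I}$, so the $\mathcal{I}$-theory is consistent. This is exactly the classical fact that the logarithm of a cut is strictly shorter, and I would transport the argument behind Theorem~5.36 of \cite{HP98} from $\omega_1$ to the function $\omega_\textswab{m}$ (total in ${\rm I\Delta_0+\Omega_\textswab{m}}$), specialising it to the cuts defined through $\exp^2(4(\cdot)^4)$ and using the equivalence $\exp(x)\!\in\!\mathcal{I}\iff x\!\in\!\mathcal{J}$ noted before the statement.

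The real work is the bounded definition of the gap. One cannot simply set $\eta(x)$ to ``$\exp^2(4x^4)$ does not exist'', since that is $\Pi_1$ rather than bounded. Instead I would invoke the $\Delta_0$ definability of the graph of exponentiation (Chapter~V of \cite{HP98}): the relations ``$2^a\!=\!b$'' and ``$2^{2^a}\!=\!b$'' are expressible by $\Delta_0$ formulas whose quantifiers are bounded by a polynomial of the larger argument $b$. Working \emph{inside} $\mathcal{I}$ one has the number $\exp^2(4(\log x)^4)$ available as a measuring rod, and this is precisely what makes a bounded definition of $x\!\notin\!\mathcal{J}$ possible: $\eta(x)$ asserts, by a block of quantifiers all sharply bounded using this rod, the existence of a $\Delta_0$-coded partial exponentiation table together with a witness that the attempt to compute $\exp^2(4x^4)$ \emph{overflows} the rod. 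The Adamowicz logarithmic-shortening technique (\cite{Ada02}, see also \cite{Kol06}) is what guarantees that such a bounded overflow certificate exists for $x\!\in\!\mathcal{I}$ exactly when $x\!\notin\!\mathcal{J}$, and that its correctness is provable in ${\rm I\Delta_0+\Omega_\textswab{m}}$.

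It then remains only to secure the consistency half by a model. Since ${\rm I\Delta_0+\Omega_\textswab{m}}\not\vdash{\rm Exp}$, I would take ${\cal M}\models{\rm I\Delta_0+\Omega_\textswab{m}}$ in which $\exp$ is not total; there the cut $\mathcal{I}$ is proper, and for a suitable $b\!\in\!\mathcal{J}$ the element $\exp(b)$ lies in $\mathcal{I}\setminus\mathcal{J}$ (because $\exp^2(4(\log\exp b)^4)=\exp^2(4b^4)$ exists while $\exp^2(4(\exp b)^4)$ does not). Hence the gap is non-empty and, by the defining equivalence of $\eta$, carries an $\eta$-witness, so ${\cal M}\models\exists x\!\in\!\mathcal{I}\,\eta(x)$. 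Note that this witness need not be provable, and indeed is not: in models of ${\rm I\Delta_0+Exp}$ both cuts collapse to the whole model and $\eta$ holds nowhere, which is why only consistency, not provability, is claimed.

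The main obstacle is the content of the second paragraph: engineering $\eta$ to be genuinely $\Delta_0$ while it still pins down the boundary of $\mathcal{J}$, i.e.\ encoding the overflow certificate so that it is polynomial in the available rod $\exp^2(4(\log x)^4)$ yet its existence is provably equivalent, over the weak theory, to $x$ lying above the $\exp^2(4(\cdot)^4)$-threshold. Every step of this equivalence must be verified using only the totality of $\omega_\textswab{m}$ and never that of $\exp$. Since the construction is uniform in $\textswab{m}$, it applies to the fixed $\textswab{m}$ at hand, and---this being the classical half of the argument---it needs no appeal to the assumption $(\pitchfork)$.
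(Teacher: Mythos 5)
There is a genuine gap, and it sits at the heart of your plan: the formula you set out to build cannot exist. You want a \emph{bounded} $\eta(x)$ with ${\rm I\Delta_0+\Omega_\textswab{m}}\vdash\forall x\!\in\!\mathcal{I}\,[\eta(x)\leftrightarrow x\!\notin\!\mathcal{J}]$. First, the ``measuring rod'' $\exp^2(4(\log x)^4)$ is not an $\mathcal{L}_A$-term and is not polynomially bounded in $x$, so a formula whose quantifiers range up to it is not $\Delta_0$ in $x$; quantifying the rod in existentially makes $\eta$ a $\Sigma_1$ formula, and the theorem (and its later use in Theorem \ref{th:b}) really needs $\eta$ bounded. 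Second, and more fatally, $\Delta_0$-absoluteness rules out any such $\eta$ regardless of how it is engineered: take $\mathcal{N}\models{\rm I\Delta_0+Exp}$ with a nonstandard $a$, and let $\mathcal{M}$ be the cut of $\mathcal{N}$ determined by $\omega_\textswab{m}^{\mathbb{N}}\big(\exp^2(4(\log a)^4)\big)$. Then $\mathcal{M}\models{\rm I\Delta_0+\Omega_\textswab{m}}$, and in $\mathcal{M}$ we have $a\in\mathcal{I}\setminus\mathcal{J}$ (so your equivalence forces $\mathcal{M}\models\eta(a)$), while in $\mathcal{N}$ we have $a\in\mathcal{J}$ (so it forces $\mathcal{N}\models\neg\eta(a)$); but a bounded formula with parameter $a$ has the same truth value in $\mathcal{N}$ and in the initial segment $\mathcal{M}$, a contradiction. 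Whether $\exp^2(4x^4)$ exists is a cofinality property of the model, not something certifiable by data lying below a polynomial of $x$, so no ``overflow certificate'' in your sense can be $\Delta_0$.

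Note that the theorem asks for much less than a bounded definition of the gap: only that ${\rm I\Delta_0+\Omega_\textswab{m}}$ refute $\exists x\!\in\!\mathcal{J}\,\eta(x)$ while $\exists x\!\in\!\mathcal{I}\,\eta(x)$ remains consistent. The paper accordingly proves existence of $\eta$ \emph{indirectly}: assuming every bounded formula's $\mathcal{I}$-witness can be consistently shortened to a $\mathcal{J}$-witness, one iterates the shortening (replacing $\psi$ by $\psi'(y)=\exists x\!\leqslant\!y\,(y\!=\!\exp(x)\wedge\psi(x))$), applies compactness to get a model with $\exp^{\mathbb{N}}(b)$ defined and hence a model of ${\rm I\Delta_0+Exp}+\exists x\psi(x)$, and concludes that every $\Pi_1$ theorem of ${\rm I\Delta_0+Exp}$ would already follow from ${\rm I\Delta_0}$ plus four applications of $\exp$ --- contradicting Theorem 5.36 of H\'ajek--Pudl\'ak. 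That is the result you cite in passing, but it enters as the final contradiction of a reductio, not as a tool for constructing an explicit $\eta$; your proposal would need to be restructured along these lines.
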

\begin{theorem}\label{th:b}
{\rm If ${\rm I\Delta_0+\Omega_\textswab{m}}\vdash{\rm HCon}({\rm I\Delta_0})$, then for any bounded formula $\theta(x)$, the consistency of the theory $({\rm I\Delta_0+\Omega_\textswab{m}})+\exists x\!\!\in\!\!\mathcal{I}\theta(x)$ implies the consistency of  $({\rm I\Delta_0+\Omega_\textswab{m}})+\exists x\!\!\in\!\!\mathcal{J}\theta(x)$.
}\end{theorem}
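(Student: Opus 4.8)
The plan is to argue model-theoretically, manufacturing a witness in the shorter cut $\mathcal{J}$ out of a witness in the longer cut $\mathcal{I}$. Assuming the hypothesis, $({\rm I\Delta_0+\Omega_\textswab{m}})+\exists x\!\in\!\mathcal{I}\,\theta(x)$ has a countable model $\mathcal{M}$; I fix an element $a\!\in\!\mathcal{I}^{\mathcal{M}}$ with $\mathcal{M}\models\theta(a)$. Invoking the standing assumption $(\pitchfork)$ gives $\mathcal{M}\models{\rm HCon}({\rm I\Delta_0})$, and this is the only place where $(\pitchfork)$ enters.

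Next I would work inside $\mathcal{M}$ to build the set of terms $\Lambda$ advertised just before the definition of the cuts. Take $\Lambda$ to contain the numeral $\underline{a}$ representing $a$, all arithmetical (polynomial) terms in $\underline{a}$, and the whole block $\digamma=\{\underline{j}\mid j\!\leqslant\!\omega_1(a)\}$ of numerals, noting that every polynomial in $a$ is already below $\omega_1(a)$. The reason for this choice is the code computation already recorded, $\omega_2(\ulcorner\Lambda\urcorner)\!\leqslant\!\mathcal{P}(\exp^2(4(\log a)^4))$, whose right-hand side exists in $\mathcal{M}$ exactly because $a\!\in\!\mathcal{I}$. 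Hence $\omega_2(\ulcorner\Lambda\urcorner)$ exists, and by the definition of Herbrand Consistency together with Theorem \ref{log4} there is a non-standard $j$ for which $\ulcorner\Lambda^{\langle j\rangle}\urcorner$ exists and an ${\rm I\Delta_0}$-evaluation $p$ on $\Lambda^{\langle j\rangle}$; Lemma \ref{lm:mlambdap} then turns $p$ into a genuine model $\mathcal{N}_0=\textswab{M}(\Lambda,p)\models{\rm I\Delta_0}$.

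The core of the transfer is to see that $\theta$ remains true of the class of $\underline{a}$ in $\mathcal{N}_0$. Since $\theta$ is bounded and every number occurring as a bound or as a potential witness for its bounded quantifiers is $\leqslant$ some polynomial in $a$, hence $<\omega_1(a)$, all such numbers are named by numerals lying in $\Lambda$. Any ${\rm I\Delta_0}$-evaluation is then forced to decide the atomic relations among these numerals exactly as $\mathcal{M}$ does (the relevant Skolem instances of the ${\rm Q}$-axioms are available in $\Lambda$, precisely as in Example \ref{q}), so the Boolean value $p$ assigns to $\theta(\underline{a})$ coincides with $\mathcal{M}$'s, namely $1$. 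Thus $\mathcal{N}_0\models\theta(\underline{a}/p)$, with $\underline{a}/p$ sitting near the top of $\mathcal{N}_0$, below $\underline{\omega_1(a)}/p$; in particular it does not yet lie in $\mathcal{J}^{\mathcal{N}_0}$.

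Finally I would apply the trick of \cite{Kol06} to upgrade $\mathcal{N}_0$ to a model $\mathcal{N}\models{\rm I\Delta_0+\Omega_\textswab{m}}$: one end-extends $\mathcal{N}_0$ (equivalently, reinterprets it using the ambient $\Omega_\textswab{m}$-structure of $\mathcal{M}$) so that $\omega_\textswab{m}$ becomes total and the formerly near-top element $\underline{a}/p$ is relocated far below the new top, i.e. so that $\exp^2(4(\underline{a}/p)^4)$ now exists. As $\theta$ is bounded it persists from $\mathcal{N}_0$ to the end-extension $\mathcal{N}$, so $\underline{a}/p$ witnesses $\mathcal{N}\models\exists x\!\in\!\mathcal{J}\,\theta(x)$, yielding the claimed consistency of $({\rm I\Delta_0+\Omega_\textswab{m}})+\exists x\!\in\!\mathcal{J}\,\theta(x)$. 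The main obstacle is exactly this last upgrade: one must produce an end-extension validating $\Omega_\textswab{m}$ in which the witness is simultaneously (i) kept from being expelled from the model and (ii) pushed down into $\mathcal{J}$, and it is here that the precise calibration of $\mathcal{I}$ and $\mathcal{J}$ — engineered so that $\omega_2(\ulcorner\digamma\urcorner)$ only just exists — is indispensable. (Taking $\theta=\eta$ and comparing with Theorem \ref{th:a} will then refute $(\pitchfork)$.)
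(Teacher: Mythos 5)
Your overall framework (take a model $\mathcal{M}$ with a witness $a\in\mathcal{I}$, use $(\pitchfork)$ to get $\mathcal{M}\models{\rm HCon}({\rm I\Delta_0})$, build a set of terms $\Lambda$, extract an ${\rm I\Delta_0}$-evaluation, and pass to $\textswab{M}(\Lambda,p)$) matches the paper, and your transfer of $\theta$ to $\underline{a}/p$ is essentially the paper's Lemma \ref{theta}. But the proposal has two genuine gaps, both located in the step you yourself flag as ``the main obstacle.''

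First, your $\Lambda$ omits the terms that make the whole argument work. The paper's $\Lambda$ contains, besides the numerals up to $\omega_1(\alpha)$, the terms ${\sf z}_0=\underline{2}$, ${\sf z}_{j+1}={\mathfrak q}({\sf z}_j)$ for $j\leqslant 4\alpha^4$, where ${\mathfrak q}$ is the Skolem function of $\exists y(y\leqslant x^2\wedge y=x^2)$. These have codes of order $\exp(j)$ but are forced by every ${\rm I\Delta_0}$-evaluation (Example \ref{q}) to denote $\exp^2(j)$; they are the only reason the Herbrand model $\mathcal{K}$ contains an element as large as $\exp^2(4(\underline{\alpha}/p)^4)$, i.e.\ the only certificate that $\underline{\alpha}/p\in\mathcal{J}^{\mathcal{K}}$. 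With your $\Lambda$, the model $\mathcal{N}_0$ simply need not contain any such element, and no later manipulation can create one.

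Second, the ``upgrade'' to a model of $\Omega_{\textswab{m}}$ goes in the wrong direction. The trick from \cite{Kol06} is not an end-extension: one uses Lemma \ref{lem2} to find $\beta$ with $\omega_{\textswab{m}}^{\mathbb{N}}(\beta)<{\sf z}_{4\alpha^4}/p\leqslant\omega_{\textswab{m}+1}(\beta)$ and then passes to the \emph{initial segment} $\mathcal{N}$ of $\mathcal{K}$ determined by $\omega_{\textswab{m}}^{\mathbb{N}}(\beta)$, which is closed under $\omega_{\textswab{m}}$. An end-extension in which $\exp^2(4(\underline{a}/p)^4)$ suddenly exists is exactly what one cannot produce from ${\rm I\Delta_0}$ data. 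Moreover, cutting down \emph{discards} ${\sf z}_{4\alpha^4}/p$, so even done correctly the witness lands only in the intermediate cut $\mathfrak{I}_{\textswab{m}}$ (with $\mathcal{J}\subset\mathfrak{I}_{\textswab{m}}\subset\mathcal{I}$), not in $\mathcal{J}$. This is why the paper routes the proof through the auxiliary Theorem \ref{th:c} and applies it \emph{twice}: once to $\theta$, and once more to the re-coded bounded formula $\theta'(y)=\exists x\leqslant y[4x^4\leqslant\omega_{\textswab{m}-1}(4(\log y)^4)\wedge\theta(x)]$, the composition of the two shortenings finally reaching $\mathcal{J}$. Your single-pass plan cannot close this gap.
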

Having proved the theorems below, we conclude our main result.
\begin{corollary}\label{main}
{\rm For any ${m}\!\in\!\mathbb{N}$, ${\rm I\Delta_0+\Omega_{m}}\not\vdash{\rm HCon}({\rm I\Delta_0})$; thus ${\rm I\Delta_0+\bigwedge\Omega_j}\not\vdash{\rm HCon}({\rm I\Delta_0})$.
}\end{corollary}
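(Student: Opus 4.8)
The plan is to derive the corollary as an immediate clash between Theorems \ref{th:a} and \ref{th:b}, running the argument by contradiction exactly as the outline of this subsection suggests. I would first establish the single-parameter statement ``${\rm I\Delta_0+\Omega_m}\not\vdash{\rm HCon}({\rm I\Delta_0})$'' for an arbitrary fixed $m$, and then lift it to the union ${\rm I\Delta_0+\bigwedge\Omega_j}$ by finiteness of proofs.

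For the first part, suppose toward a contradiction that ${\rm I\Delta_0+\Omega_m}\vdash{\rm HCon}({\rm I\Delta_0})$ for some $m$. Since the hierarchy $\circledast$ is increasing, ${\rm I\Delta_0+\Omega_m}\subseteq{\rm I\Delta_0+\Omega_{\textswab{m}}}$ for $\textswab{m}=\max(m,2)$, so this $\textswab{m}\geq 2$ already satisfies ($\pitchfork$), namely ${\rm I\Delta_0+\Omega_{\textswab{m}}}\vdash{\rm HCon}({\rm I\Delta_0})$. Fixing this $\textswab{m}$ and the associated cuts $\mathcal{I},\mathcal{J}$, Theorem \ref{th:a} furnishes a bounded formula $\eta(x)$ for which $({\rm I\Delta_0+\Omega_{\textswab{m}}})+\exists x\!\in\!\mathcal{I}\,\eta(x)$ is consistent while $({\rm I\Delta_0+\Omega_{\textswab{m}}})+\exists x\!\in\!\mathcal{J}\,\eta(x)$ is not. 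I would then apply Theorem \ref{th:b} with $\theta=\eta$: its hypothesis ${\rm I\Delta_0+\Omega_{\textswab{m}}}\vdash{\rm HCon}({\rm I\Delta_0})$ is precisely ($\pitchfork$), and the consistency of the $\mathcal{I}$-version is in hand, so the theorem yields the consistency of $({\rm I\Delta_0+\Omega_{\textswab{m}}})+\exists x\!\in\!\mathcal{J}\,\eta(x)$. This directly contradicts Theorem \ref{th:a}, whence ${\rm I\Delta_0+\Omega_m}\not\vdash{\rm HCon}({\rm I\Delta_0})$ for every $m$.

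For the second part, suppose ${\rm I\Delta_0+\bigwedge\Omega_j}\vdash{\rm HCon}({\rm I\Delta_0})$. As ${\rm HCon}({\rm I\Delta_0})$ is a single sentence, any derivation of it invokes only finitely many axioms; and since ${\rm I\Delta_0+\bigwedge\Omega_j}=\bigcup_{n\geq 1}({\rm I\Delta_0+\Omega_n})$ is a nested union, all these axioms already lie in some single ${\rm I\Delta_0+\Omega_m}$. Hence ${\rm I\Delta_0+\Omega_m}\vdash{\rm HCon}({\rm I\Delta_0})$, contradicting the first part, and so ${\rm I\Delta_0+\bigwedge\Omega_j}\not\vdash{\rm HCon}({\rm I\Delta_0})$.

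The corollary presents no genuine obstacle once Theorems \ref{th:a} and \ref{th:b} are available: the only points needing care are the passage from $\bigwedge\Omega_j$ to a fixed $\Omega_{\textswab{m}}$ through finiteness of proofs (using monotonicity of the hierarchy to arrange $\textswab{m}\geq 2$, which the cut constructions and the bound on $\omega_2(\ulcorner\digamma\urcorner)$ tacitly require), and the elementary book-keeping verifying that Theorem \ref{th:b} converts the consistency delivered by ($\pitchfork$) into exactly the statement Theorem \ref{th:a} rules out. All the real difficulty is concentrated in those two theorems rather than in their combination here.
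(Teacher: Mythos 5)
Your proposal is correct and follows essentially the same route as the paper: the single-$m$ case is obtained by fixing $\textswab{m}\geq 2$ satisfying ($\pitchfork$) and playing Theorem \ref{th:b} (with $\theta=\eta$) against Theorem \ref{th:a}, and the union case reduces to a single $\Omega_{\textswab{m}}$ by finiteness of proofs, exactly as in the paper's outline preceding the two theorems. Your explicit handling of $\textswab{m}=\max(m,2)$ and the nestedness of the union is careful book-keeping the paper leaves implicit, but it is the same argument.
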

We can alreay prove Theorem \ref{th:a}, which is an interesting theorem in its own right.

\begin{proof}  ({\bf of Theorem \ref{th:a}}.)  The proof is rather long and we will sketch the main ideas, cf. the proof of Theorem 5.36 in \cite{HP98}. We will follow \cite{Ada02} here. If the theorem does not hold, then for {\em any} bounded formula $\theta(x)$, the consistency of  $({\rm I\Delta_0+\Omega_\textswab{m}})+\exists x\!\!\in\!\!\mathcal{I}\theta(x)$ will imply the consistency of  $({\rm I\Delta_0+\Omega_\textswab{m}})+\exists x\!\!\in\!\!\mathcal{J}\theta(x)$. Now let $\psi(x)$ be a bounded formula such that the theory $({\rm I\Delta_0+\Omega_\textswab{m}})+\exists x\!\!\in\!\!\mathcal{I}\psi(x)$ is consistent. Then $({\rm I\Delta_0+\Omega_\textswab{m}})+\exists x\!\!\in\!\!\mathcal{J}\psi(x)$ is consistent also. The formula $\exists x\!\!\in\!\!\mathcal{J}\psi(x)$ is equivalent to $\exists y\!\!\in\!\!\mathcal{I}\psi'(y)$ where $\psi'(y)=\exists x\!\!\leqslant\!\!y(y\!=\!\exp(x)\!\wedge\!\psi(x))$ is a bounded formula. So, the theory $({\rm I\Delta_0+\Omega_\textswab{m}})+\exists y\!\!\in\!\!\mathcal{I}\psi'(y)$ is consistent, and by the assumption,  the theory $({\rm I\Delta_0+\Omega_\textswab{m}})+\exists y\!\!\in\!\!\mathcal{J}\psi'(y)$ must be consistent too. Again the formula $\exists y\!\!\in\!\!\mathcal{J}\psi'(y)$ is equivalent to $\exists z\!\!\in\!\!\mathcal{I}\exists x\!\!\leqslant\!\!z(z\!=\!\exp^2(x)\!\wedge\!\psi(x))$. Continuing this way, we infer that the theory $({\rm I\Delta_0+\Omega_\textswab{m}})+\exists u\!\!\in\!\!\mathcal{I}\exists x\!\!\leqslant\!\!u(u\!=\!\exp^k(x)\!\wedge\!\psi(x))$ is consistent for any natural $k\!\in\!\mathbb{N}$. Let $\mathfrak{b}$ be a constant symbol. By the above argument, the theory $({\rm I\Delta_0+\Omega_\textswab{m}})+\{\exists z[z=\exp^k(\mathfrak{b})\!\wedge\!\psi(\mathfrak{b})]\!\mid\!k\!\in\!\mathbb{N}\}$ is finitely consistent, and whence it is consistent. Thus there exists a model $\mathcal{K}\models{\rm I\Delta_0}$ such that for some element $b\!\in\!\mathcal{K}$, $\mathcal{K}\models\exists z[z=\exp^k(b)\!\wedge\!\psi(b)]$ for any $k\!\in\!\mathbb{N}$. The initial segment $\mathcal{M}$ of $\mathcal{K}$ determined by   $\{a\!\in\!\mathcal{K}\!\mid\!\exists k\!\in\!\mathbb{N}: a\!\leqslant\!\exp^k(b)\}=\exp^{\mathbb{N}}(b)$ is a model of ${\rm I\Delta_0+Exp}$ for which $\mathcal{M}\models\psi(b)$. Thus the theory $({\rm I\Delta_0+Exp})+\exists x\psi(x)$ is consistent. Hence, if the theorem is not true, then for {\em any} bounded formula $\psi(x)$, if the theory $({\rm I\Delta_0+\Omega_\textswab{m}})+\exists x\!\!\in\!\!\mathcal{I}\theta(x)$ is consistent, then $({\rm I\Delta_0+Exp})+\exists x\psi(x)$ is also consistent. Contrapositing this statement, we get: if for a $\Pi_1-$formula $\forall x\theta(x)$ (with bounded $\theta$) we have ${\rm I\Delta_0+Exp}\vdash\forall x\theta(x)$, then we must also have ${\rm I\Delta_0+\Omega_\textswab{m}}\vdash\forall x\!\!\in\!\!\mathcal{I}\theta(x)$. Since for any $x\!\!\in\!\!\mathcal{I}$ the value $\exp^3(x)$ exists, and all the finite applications of $\omega_{\textswab{m}}$ are dominated by one use of $\exp$, then ${\rm I\Delta_0+\Omega_\textswab{m}}\vdash\forall x\!\!\in\!\!\mathcal{I}\theta(x)$ implies that ${\rm I\Delta_0}\vdash\forall x[\exists y(y=\exp^4(x))\rightarrow\theta(x)]$. All in all, from the falsity of the theorem we inferred that whenever ${\rm I\Delta_0+Exp}\vdash\forall x\theta(x)$ for a bonded $\theta(x)$, then ${\rm I\Delta_0}\vdash\forall x[\exists y(y=\exp^4(x))\rightarrow\theta(x)]$. Or in other words, four times application of ${\rm Exp}$ is engough to deduce all the $\Pi_1-$theorems of ${\rm I\Delta_0+Exp}$! And this is in contradiction with Theorem 5.36 of \cite{HP98}.
\end{proof}

The rest of the paper will be dedicated to proving Theorem \ref{th:b}.
\begin{definition}
{\rm The inverse of $\omega_n$, denoted by  $\varpi_n(x)$, is defined to be the smallest $y$ such that the inequality $\omega_n(y)\!\geqslant\!x$ holds.
The cut $\mathfrak{I}_n$ is the set $\{x\!\mid\!\exists y[y=\exp^2(\varpi_{n-1}(4x^4))]\}$.
\hfill $\subset\!\!\!\!\supset$
}\end{definition}
Let us note that $\mathcal{J}\!\subset\!\mathfrak{I}_n\!\subset\!\mathcal{I}$ holds for any $n\!>\!1$. We prove Theorem \ref{th:b}  by  an auxiliary  theorem.
\begin{theorem}\label{th:c}
{\rm If ${\rm I\Delta_0+\Omega_\textswab{m}}\vdash{\rm HCon}({\rm I\Delta_0})$, then for any bounded formula $\theta(x)$, the consistency of the theory $({\rm I\Delta_0+\Omega_\textswab{m}})+\exists x\!\in\!\mathcal{I}\theta(x)$ implies the consistency of  $({\rm I\Delta_0+\Omega_\textswab{m}})+\exists x\!\in\!\mathfrak{I}_\textswab{m}\theta(x)$.
}\end{theorem}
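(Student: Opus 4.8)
The plan is to argue model-theoretically, exploiting the hypothesis through a single Herbrand evaluation. Assume ($\pitchfork$), that is ${\rm I\Delta_0+\Omega_\textswab{m}}\vdash{\rm HCon}({\rm I\Delta_0})$, and let $\theta$ be a bounded formula for which $({\rm I\Delta_0+\Omega_\textswab{m}})+\exists x\!\in\!\mathcal{I}\,\theta(x)$ is consistent. By the completeness theorem I fix a model ${\cal M}\models({\rm I\Delta_0+\Omega_\textswab{m}})+\exists x\!\in\!\mathcal{I}\,\theta(x)$ and an element $a$ with ${\cal M}\models a\!\in\!\mathcal{I}\wedge\theta(a)$; the target is to produce a model of $({\rm I\Delta_0+\Omega_\textswab{m}})+\exists x\!\in\!\mathfrak{I}_\textswab{m}\,\theta(x)$. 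The decisive first move is to apply ($\pitchfork$) inside ${\cal M}$, so that ${\cal M}\models{\rm HCon}({\rm I\Delta_0})$, which by the definition of Herbrand Consistency furnishes an ${\rm I\Delta_0}$-evaluation on every set of terms $\Lambda\!\in\!{\cal M}$ for which $\omega_2(\ulcorner\Lambda\urcorner)$ exists.

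Next I would fix the term set. Let $\digamma=\{\underline{j}\mid j\!\leqslant\!\omega_1(a)\}$ and let $\Lambda$ consist of $\digamma$ together with the (finitely many) subterms occurring in $\theta(\underline{a})$; note that $a\!\leqslant\!\omega_1(a)$, so $\underline{a}\!\in\!\digamma$, and since ${\cal M}\models\Omega_\textswab{m}$ with $\textswab{m}\!\geqslant\!2$, the numeral segment $\digamma$ is genuinely available in ${\cal M}$. The code computation recorded before the definition of $\mathcal{I}$ gives $\omega_2(\ulcorner\digamma\urcorner)\!\leqslant\!{\mathcal P}\!\left(\exp^2(4(\log a)^4)\right)$, and the same bound controls $\omega_2(\ulcorner\Lambda\urcorner)$; because $a\!\in\!\mathcal{I}$ the value $\exp^2(4(\log a)^4)$ exists, so $\omega_2(\ulcorner\Lambda\urcorner)$ exists. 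Hence ${\rm HCon}({\rm I\Delta_0})$ yields an ${\rm I\Delta_0}$-evaluation $p$ on $\Lambda$; extending it along $\Lambda^{\langle j\rangle}$ for a non-standard $j$ by Theorem \ref{log4} and invoking Lemma \ref{lm:mlambdap}, I obtain a Herbrand model ${\textswab N}={\textswab M}(\Lambda,p)\models{\rm I\Delta_0}$.

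It then remains to see that ${\textswab N}$ carries a witness for $\theta$ of the right size. Writing $b$ for the class $\underline{a}/p$, I would first argue ${\textswab N}\models\theta(b)$. This rests on the rigidity of ${\rm I\Delta_0}$-evaluations on numerals: every Skolem instance of ${\rm Q}$ available in $\digamma$ must be satisfied by $p$, so the truth value $p$ assigns to each atomic subformula of $\theta(\underline{a})$ is forced to coincide with its value in the standard computation; since $\theta$ is bounded and ${\cal M}\models\theta(a)$, this pins down $p\models\theta(\underline{a})$, whence ${\textswab N}\models\theta(b)$ by Lemma \ref{lm:mlambdap}.

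Finally I would upgrade ${\textswab N}$ from ${\rm I\Delta_0}$ to ${\rm I\Delta_0+\Omega_\textswab{m}}$ by the trick of \cite{Kol06}: pass to a definable cut $K$ of ${\textswab N}$ on which $\Omega_\textswab{m}$ holds while $b$ is retained, the point being that the $\omega_1(a)$ distinct numerals manufactured below $b$ leave enough room to carry out the $\omega_\textswab{m}$-closure inside ${\textswab N}$. Bounded absoluteness then transfers $\theta(b)$ to $K$, and the membership $b\!\in\!\mathfrak{I}_\textswab{m}^{K}$ — the existence of $\exp^2(\varpi_{\textswab{m}-1}(4b^4))$ in $K$ — is exactly what the choice of $\Lambda$ and of the cut is arranged to guarantee. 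I expect this last calibration to be the main obstacle: one must take $\Lambda$ large enough that $\digamma$ records enough of the witness while keeping $\omega_2(\ulcorner\Lambda\urcorner)$ in existence (this is precisely why $\mathcal{I}$, the weakest of the three cuts, is forced on the hypothesis side), and at the same time ensure that the witness surviving the $\Omega_\textswab{m}$-cut provably meets the stronger existence demand defining $\mathfrak{I}_\textswab{m}$. Balancing this shortening from $\mathcal{I}$ down to $\mathfrak{I}_\textswab{m}$ against the resources needed to run ${\rm HCon}({\rm I\Delta_0})$ is the crux, and I anticipate that the inversion estimates of Lemma \ref{lem2}, comparing $\omega_m^N$ with $\omega_{m+1}$, are what make the cut land correctly.
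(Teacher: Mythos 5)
Your overall frame---apply $(\pitchfork)$ inside a model ${\cal M}$ of $({\rm I\Delta_0+\Omega_\textswab{m}})+\alpha\!\in\!\mathcal{I}\wedge\theta(\alpha)$, extract an ${\rm I\Delta_0}$-evaluation on a term set containing the numerals $\underline{j}$ for $j\!\leqslant\!\omega_1(\alpha)$, pass to the Herbrand model, transfer $\theta$ to the class of $\underline{\alpha}$, and then cut down to restore $\Omega_\textswab{m}$---is exactly the paper's strategy. But there is a genuine gap at the step you yourself identify as the crux, and it is not a mere calibration issue: with $\Lambda$ consisting only of $\digamma$ plus the subterms of $\theta(\underline{\alpha})$, the Herbrand model ${\textswab M}(\Lambda,p)$ is generated by terms whose forced values are polynomial in $\omega_1(\alpha)$ (the closure $\Lambda^{\langle j\rangle}$ only iterates Skolem functions about $\log^4\ulcorner\Lambda\urcorner$ many times), so it contains no element anywhere near $\exp^2(\varpi_{\textswab{m}-1}(4\alpha^4))$. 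Consequently there is no $\beta$ to feed into Lemma \ref{lem2}, no initial segment closed under $\omega_\textswab{m}$ that still witnesses $\underline{\alpha}/p\in\mathfrak{I}_\textswab{m}$, and the conclusion fails. No choice of cut can manufacture large elements that the Herbrand model simply does not have.

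The missing idea is the fast-growing-term device: the paper adjoins to $\Lambda$ the terms ${\sf z}_0=\underline{2}$, ${\sf z}_{j+1}={\mathfrak q}({\sf z}_j)$ for $j\leqslant 4\alpha^4$, where ${\mathfrak q}$ is the Skolem function of the instance of $\Delta_0$-induction treated in Example \ref{q}. That example shows every $({\rm Q}+B)$-evaluation is forced to satisfy ${\mathfrak q}(t)=t^2$, so ${\sf z}_j$ must denote $\exp^2(j)$ in the Herbrand model, while its G\"odel code is only of order $\exp(j)$ (it is a string of $j$ unary symbols). This exponential gap between code and forced value is precisely what lets $\omega_2(\ulcorner\Lambda\urcorner)$ still exist under the weak hypothesis $\alpha\in\mathcal{I}$ (the code of $\{{\sf z}_j\mid j\leqslant 4\alpha^4\}$ is about $\exp(\alpha^8)$, comparable to $\ulcorner\digamma\urcorner$) while guaranteeing that ${\sf z}_{4\alpha^4}/p=\exp^2(4[\underline{\alpha}/p]^4)$ lives in ${\textswab M}(\Lambda,p)$, so that $\underline{\alpha}/p\in\mathcal{J}$ there. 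Only then does Lemma \ref{lem2} produce $\beta$ with $\omega_{\textswab{m}}^{\mathbb{N}}(\beta)<{\sf z}_{4\alpha^4}/p\leqslant\omega_{\textswab{m}+1}(\beta)$, and the initial segment determined by $\omega_{\textswab{m}}^{\mathbb{N}}(\beta)$ models $\Omega_\textswab{m}$ and keeps $\exp^2(\varpi_{\textswab{m}-1}(4[\underline{\alpha}/p]^4))\leqslant\beta$, i.e.\ $\underline{\alpha}/p\in\mathfrak{I}_\textswab{m}$. A secondary, repairable weakness: your claim that $p$'s values on the atoms of $\theta(\underline{\alpha})$ are ``pinned down'' is not an argument for ${\textswab M}(\Lambda,p)\models\theta(\underline{\alpha}/p)$, since $\theta$ is a bounded formula with quantifiers ranging over $\alpha^{\mathbb{N}}$; the paper handles this by exhibiting an isomorphism between the initial segments of ${\cal M}$ and of the Herbrand model determined by $\alpha^{\mathbb{N}}$ and $(\underline{\alpha}/p)^{\mathbb{N}}$ respectively (Lemma \ref{theta}), which is why all numerals up to $\omega_1(\alpha)$ are needed in $\Lambda$.
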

Having proved this, Theorem \ref{th:b} can be proved easily:

\begin{proof} ({\bf of Theorem \ref{th:b} from Theorem \ref{th:c}}.)
Assume ${\rm I\Delta_0+\Omega_\textswab{m}}\vdash{\rm HCon}({\rm I\Delta_0})$. Let $\theta(x)$ be a bounded formula such that $({\rm I\Delta_0+\Omega_\textswab{m}})+\exists x\!\in\!\mathcal{I}\theta(x)$ is consistent. Then by Theorem \ref{th:c},    $({\rm I\Delta_0\!+\!\Omega_\textswab{m}})\!+\!\exists x\!\in\!\mathfrak{I}_\textswab{m}\theta(x)$ is consistent too. Let $\theta'(y)$ be the bounded formula $\theta'(y)=\exists x\!\leqslant\!y[4x^4\!\leqslant\!\omega_{\textswab{m}-1}(4(\log y)^4)\!\wedge\!\theta(x)]$; then $\exists x\!\in\!\mathfrak{I}_\textswab{m}\theta(x)$ is equivalent to $\exists y\!\in\!\mathcal{I}\theta'(y)$. Now, since the theory $({\rm I\Delta_0+\Omega_\textswab{m}})+\exists y\!\in\!\mathcal{I}\theta'(y)$ is consistent, again by Theorem \ref{th:c}, the theory $({\rm I\Delta_0+\Omega_\textswab{m}})+\exists y\!\in\!\mathfrak{I}_\textswab{m}\theta'(y)$ must be consistent. Then we note that the implication  $(y\!\in\!\mathfrak{I}_\textswab{m})\!\wedge\![4x^4\!\leqslant\!\omega_{\textswab{m}-1}(4(\log y)^4)]\Rightarrow (x\!\in\!\mathcal{J})$ holds for non-standard $x$ and $y$, because $\omega_{\textswab{m}-1}^2(4[\log y]^4)\!<\!4y^4$. So, $({\rm I\Delta_0+\Omega_\textswab{m}})+\exists x\!\in\!\mathcal{J}\theta(x)$ is consistent.
\end{proof}

For proving Theorem \ref{th:c} we assume that
 for the bounded formula $\theta(x)$ there exists a model $\mathcal{M}$ such that
{$(\circ)   \ \mathcal{M}\models({\rm I\Delta_0+\Omega_\textswab{m}})\!+\!(\alpha\!\in\!\mathcal{I}\!\wedge\!\theta(\alpha))$}
holds for some non-standard $\alpha\!\in\!\mathcal{M}$.
 We will construct another model 
{$\mathcal{N}\models({\rm I\Delta_0+\Omega_\textswab{m}})+\exists x\!\!\in\!\!\mathfrak{I}_\textswab{m}\theta(x).$}
Define the terms $\underline{j}$'s by induction: $\underline{0}=0$, and  $\underline{j+1}={\mathfrak s}(\underline{j})$. The term $\underline{j}$ represents the (standard or non-standard) number $j$. Let ${\mathfrak q}$ be the Skolem function symbol for the formula ${\exists y(y\leqslant x^2\wedge y=x^2)}$ and  ${\mathfrak c}$ be the Skolem constant symbol for the  sentence ${\exists x\big(\exists w (w\leqslant x^2\wedge w=x^2)\wedge\forall v (v\not\leqslant({\mathfrak s} x)^2\wedge v\not=({\mathfrak s} x)^2)\big)}$, and let $\Upsilon$ be the following set of term $\Upsilon=\{0,  0+0,  0^2, {\mathfrak c}, {\mathfrak c}^2, {\mathfrak c}^2+0, {\mathfrak s}{\mathfrak c}, {\mathfrak q}{\mathfrak c},  ({\mathfrak s}{\mathfrak c})^2,  ({\mathfrak s}{\mathfrak c})^2+0\}$ (see Example \ref{q}). Define the terms ${\sf z}_i$'s inductively: ${\sf z}_0=\underline{2}$, and ${\sf z}_{j+1}={\mathfrak q}({\sf z}_j)$. Since we will have ${\sf q}(x)=x^2$ in ${\rm I\Delta_0}-$evaluations (by Example \ref{q}), then ${\sf z}_i$ will represent $\exp^2(i)$ (can be verified by induction on $i$).  Take {$\Lambda=\Upsilon\cup\{\underline{j}\!\mid\!j\!\leqslant\!\omega_1(\alpha)\}\cup\{{\sf z}_j\!\mid\!j\!\leqslant\!4\alpha^4\}$}; then
$\omega_2(\ulcorner\Lambda\urcorner)$ is of order  $\exp^2\big(4(\log\alpha)^4\big)$ which exists  by the assumption ${\mathcal M}\models \alpha\!\in\!{\mathcal I}$ (see $(\circ)$ above). Since by the assumptions $(\pitchfork)$ and $(\circ)$ we have $\mathcal{M}\models{\rm HCon}({\rm I\Delta_0})$, then there must exist an ${\rm I\Delta_0}-$evaluation $p\!\in\!\mathcal{M}$. Now, we can build the model $\mathcal{K}:={\textswab M}(\Lambda,p)$.
\begin{lemma}\label{theta}
{\rm With the above assumptions, $\mathcal{K}\models\theta(\underline{\alpha}/p)$.
}\end{lemma}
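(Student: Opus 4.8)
The plan is to reason inside $\mathcal{M}$ and to show that the numeral map $\iota\colon j\mapsto\underline{j}/p$ faithfully transports the initial segment $[0,N]^{\mathcal{M}}$, where $N=\omega_1(\alpha)$, onto the corresponding initial segment of $\mathcal{K}$. Since $\theta$ is bounded and $\alpha$ is non-standard, every polynomial term occurring in $\theta(\alpha)$ takes a value below $\omega_1(\alpha)=N$ (because $\omega_1$ dominates all polynomials), so all the witnesses and bounds relevant to $\theta(\alpha)$ lie in $[0,N]^{\mathcal{M}}$, and the numerals $\{\underline{j}\mid j\!\leqslant\!N\}\subseteq\Lambda$ provide names in $\mathcal{K}$ for exactly these elements. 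Once $\iota$ is shown to be an isomorphism of the two segments (respecting $0,{\mathfrak s},+,\cdot,\leqslant$ as long as values stay $\leqslant\!N$), a routine induction on the build-up of $\theta$ will give $\mathcal{M}\models\theta(\alpha)\iff\mathcal{K}\models\theta(\underline{\alpha}/p)$, and in particular $\mathcal{K}\models\theta(\underline{\alpha}/p)$, since $\underline{\alpha}\in\Lambda$ (as $\alpha\leqslant\omega_1(\alpha)=N$). Throughout, recall that $\mathcal{K}\models{\rm I\Delta_0}$ by Lemma~\ref{lm:mlambdap}, because $p$ is an ${\rm I\Delta_0}$-evaluation.

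First I would verify that $\iota$ preserves the atomic structure. Working inside $\mathcal{M}\models{\rm I\Delta_0+\Omega_\textswab{m}}$, the predicates ``$p\models\underline{i}=\underline{j}$'' and ``$p\models\underline{i}\leqslant\underline{j}$'' are $\Delta_0$ in the parameters $p,\alpha$ (the relevant codes exist because $\omega_2(\ulcorner\Lambda\urcorner)$ exists), so the least-number principle is available. Using the Skolem instances of $A_1,A_2$ available in $\Lambda$, a minimal-counterexample argument yields $p\models\underline{i}=\underline{j}\iff i=j$ for $i,j\leqslant N$; using $A_5,A_6$ one shows $p\models\underline{i}+\underline{j}=\underline{i+j}$ whenever $i+j\leqslant N$, and $A_7,A_8$ then give $p\models\underline{i}\cdot\underline{j}=\underline{i\cdot j}$ whenever $i\cdot j\leqslant N$; finally $A_4$ together with these identities yields $p\models\underline{i}\leqslant\underline{j}\iff i\leqslant j$. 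Composing along any fixed $\mathcal{L}_A$-term $s$ shows that $p\models s(\underline{a_1},\dots,\underline{a_k})=\underline{s^{\mathcal{M}}(\bar a)}$ whenever all intermediate values of $s$ stay $\leqslant\!N$, which is guaranteed by choosing $N$ large enough to dominate the finitely many polynomials occurring in $\theta$.

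The crux, and the step I expect to be the main obstacle, is the surjectivity of $\iota$ onto the initial segment of $\mathcal{K}$ below $\underline{N}/p$: every $c\in\mathcal{K}$ with $c\leqslant^{\mathcal{K}}\underline{N}/p$ must equal $\underline{j}/p$ for some $j\leqslant N$. Write $c=t/p$ with $t\in\Lambda^{\langle\infty\rangle}$. Inside $\mathcal{M}$, the set $\{j\leqslant N\mid p\models\underline{j}\leqslant t\}$ is $\Delta_0$ and contains $0$ (since $\mathcal{K}\models{\rm I\Delta_0}$ forces $\mathcal{K}\models 0\leqslant t/p$), so by the least-number principle it has a greatest element $j^{*}$. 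If $j^{*}<N$ then $p\not\models\underline{j^{*}+1}\leqslant t$, i.e.\ $\mathcal{K}\models\underline{j^{*}}/p\leqslant t/p<{\mathfrak s}\underline{j^{*}}/p$, whence the discreteness of $\leqslant$ provable in ${\rm I\Delta_0}$ forces $t/p=\underline{j^{*}}/p$; and if $j^{*}=N$ then antisymmetry gives $t/p=\underline{N}/p$. This is exactly where $\mathcal{K}\models{\rm I\Delta_0}$ (and not merely $\mathcal{K}\models{\rm Q}$) is indispensable, since ${\rm Q}$ does not prove discreteness; it is the reason Herbrand Consistency of ${\rm I\Delta_0}$, rather than of ${\rm Q}$, is assumed in $(\pitchfork)$.

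With the embedding and the surjectivity in hand, the transfer is completed by an (external) induction on the logical complexity of $\theta$, proving $\mathcal{M}\models\phi(\bar a)\iff\mathcal{K}\models\phi(\iota\bar a)$ for every subformula $\phi(\bar x)$ of $\theta$ and all arguments $\bar a\leqslant N$. The atomic case is the term-identity established in the second paragraph; the Boolean cases are immediate; and for a bounded quantifier with bound $s(\bar x)$ the two directions are handled by pushing a witness $\leqslant N$ through $\iota$ in one direction and, in the other, by using the surjectivity of the previous paragraph to replace an arbitrary $\mathcal{K}$-element $\leqslant^{\mathcal{K}}\iota\big(s^{\mathcal{M}}(\bar a)\big)$ by a numeral class $\underline{j}/p$ with $j\leqslant s^{\mathcal{M}}(\bar a)\leqslant N$, before applying the induction hypothesis. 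Instantiating at $\phi=\theta$ and $\bar a=\alpha$ gives $\mathcal{M}\models\theta(\alpha)\Rightarrow\mathcal{K}\models\theta(\underline{\alpha}/p)$, which, by the assumption $(\circ)$ that $\mathcal{M}\models\theta(\alpha)$, is precisely the claim.
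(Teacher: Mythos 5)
Your proof is correct, and it implements the same underlying idea as the paper's own (sketched) argument: use the numeral map $j\mapsto\underline{j}/p$ to set up an isomorphism between an initial part of $\mathcal{M}$ large enough to contain all witnesses of the bounded formula $\theta(\alpha)$ and the corresponding initial part of $\mathcal{K}$, then transfer $\theta$ by induction on its build-up. The scaffolding differs in an instructive way. The paper works with the cut $\mathcal{M}'$ determined by $\alpha^{\mathbb{N}}$, which is closed under $+$ and $\cdot$; its key technical lemma is that every element of $\mathcal{M}'$ equals $s(b_1,\ldots,b_m)$ for some $\mathcal{L}_A$-term $s$ and parameters $b_i\!\leqslant\!\alpha$ (proved by induction on the complexity of $s$ inside ${\rm I\Delta_0}$), so the correspondence $t(\bar{i})\mapsto t(\underline{\bar{i}}/p)$ is a total isomorphism of $\mathcal{L}_A$-structures, with surjectivity onto $\mathcal{K}'$ left largely implicit (the paper defers details to Corollary 35 of \cite{Sal10}). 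You instead work with the interval $[0,\omega_1(\alpha)]$, which is not closed under the operations, so you must track that all intermediate term values stay below $\omega_1(\alpha)$ --- harmless, since $\theta$ is a fixed standard formula and $\omega_1$ dominates all polynomials at the non-standard $\alpha$; in exchange, your greatest-element-plus-discreteness argument (legitimately run via the $\Delta_0$ least-number principle in $\mathcal{M}$, the relevant codes existing because $\omega_2(\ulcorner\Lambda\urcorner)$ does, and exploiting $\mathcal{K}\models{\rm I\Delta_0}$ for discreteness) makes fully explicit the surjectivity of the numeral map onto the initial segment of $\mathcal{K}$, which is exactly the step the paper's sketch glosses over. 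Both routes are sound; yours trades the paper's clean closed-cut formulation for a more self-contained verification of the one point a careful reader would question.
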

After proving this lemma, we can finish the proof of Theorem \ref{th:c}.

\begin{proof} ({\bf of Theorem \ref{th:c} from Lemma \ref{theta}}.)
  By Lemma  \ref{lm:mlambdap} we already have $\mathcal{K}\models{\rm I\Delta_0}$, and by Lemma~\ref{theta}, $\mathcal{K}\models\theta(\underline{\alpha}/p)$. We can  see that $\underline{\alpha}/p\!\in\!\mathcal{J}^{\mathcal{K}}$ by the existence of ${\sf z}_i/p$'s ($\mathcal{K}\models{\sf z}_{4\alpha^4}/p=\exp^2(4[\underline{\alpha}/p]^2)$). Whence $\mathcal{K}\models\alpha/p\!\in\!\mathcal{J}\!\wedge\!\theta(\underline{\alpha}/p)$. By Lemma \ref{lem2} there exists some (non-standard) element $\beta\!\in\!\mathcal{K}$ such that  the inequalities $\omega_\textswab{m}^\mathbb{N}(\beta)<{\sf z}_{4\alpha^4}/p\!\leqslant\!\omega_{\textswab{m}+1}(\beta)$ hold.
Now, let $\mathcal{N}$ be the initial segment of $\mathcal{K}$ determined by $\omega_\textswab{m}^\mathbb{N}(\beta)$, i.e., $\mathcal{N}\!=\!\{x\!\in\!\mathcal{K}\!\mid\!\exists k\!\in\!\mathbb{N}:\! x\!<\!\omega_\textswab{m}^k(\beta)\}$. We have $\mathcal{N}\models({\rm I\Delta_0+\Omega_\textswab{m}})+\theta(\underline{\alpha}/p)$, and all we have to
o show is that $\mathcal{N}\models\underline{\alpha}/p\!\in\!\mathfrak{I}_\textswab{m}$.  First note that $\beta\!\in\!\mathcal{N}$, and second that $\exp^2(4[\underline{\alpha}/p]^4)\!\leqslant\!\omega_{\textswab{m}+1}(\beta)$ implies $4[\underline{\alpha}/p]^4\!\leqslant\!\omega_{\textswab{m}-1}(\log^2\beta)$, and so  we have $\varpi_{\textswab{m}-1}(4[\underline{\alpha}/p]^4)\!\leqslant\!\log^2\beta$. Thus  $\exp^2(\varpi_{\textswab{m}-1}(4[\underline{\alpha}/p]^4))$ exists ($\!\leqslant\!\beta$), and  so $[\underline{\alpha}/p]\!\in\!\mathfrak{I}_\textswab{m}$.
\end{proof}

Finally, it remains (only) to prove Lemma \ref{theta}. This is exactly Corollary 35 of \cite{Sal10}; and the reader is  invited to consult it for more details. Here we  sketch a proof, for the sake of self-containedness.

\begin{proof} ({\bf of Lemma \ref{theta} -- A Sketch}.)
Since $\theta(x)\!\in\!\Delta_0$ and $\mathcal{M}\models\theta(\alpha)$, we note that the range of the quantifiers of $\theta(\alpha)$ is the set $\{x\!\in\!\mathcal{M}\mid x\!\leqslant\!t(\alpha) \textrm{ \ for \ some \ }\mathcal{L}_A\textrm{--term} \ t\}$. This set is the initial segment of $\mathcal{M}$ determined by $\alpha^\mathbb{N}$; denote it by $\mathcal{M}'$. We have $\mathcal{M}'\models\theta(\alpha)$. For any $j\!\in\!\alpha^\mathbb{N}$ we have the corresponding $\underline{j}\!\in\!\Lambda$, and thus $\underline{j}/p\!\in\!\mathcal{K}$. So, this suggests a correspondence between ${\alpha}^\mathbb{N}$ and the initial segment of $\mathcal{K}$ determined by $(\underline{\alpha}/p)^\mathbb{N}$ which we denote it by $\mathcal{K}'$. It suffices to show that this correspondence exists and is an isomorphism between $\mathcal{M}'$ and $\mathcal{K}'$. Because, then we will have $\mathcal{K}'\models\theta(\underline{\alpha}/p)$ which will immediately imply $\mathcal{K}\models\theta(\underline{\alpha}/p)$ -- our desired conclusion.

We first note that $\mathcal{M}'=\{t(i_1,\ldots,i_n)\mid i_1,\ldots,i_n\!\leqslant\!\alpha \ \& \ t \ {\rm is \ an} \ \mathcal{L}_A\!\!-\!\!{\rm term}\}$. This follows from a more general fact: if for some model $\mathfrak{A}\models{\rm I\Delta_0}$ and  $x,a_1,\ldots,a_n\!\in\!\mathfrak{A}$ we have $\mathfrak{A}\models x\!\leqslant\!t(a_1,\ldots,a_n)$ for an $\mathcal{L}_A-$term $t$, then there are some $b_1,\ldots,b_m\!\in\!í\mathfrak{A}$ and some $\mathcal{L}_A-$term $s$ such that $\mathfrak{A}\models x\!=\!s(b_1,\ldots,b_m)$;  moreover $\max b_j\!\leqslant\!\max a_i$. This can be proved by induction on the complexity of $t$. For $t=t_1+t_2$, distinguish two cases: (1) if $\mathfrak{A}\models x\!\leqslant\!t_1(\overline{a})$, where $\overline{a}$ is a shorthand for $(a_1,\ldots,a_n)$, then we are done by the induction hypothesis; (2) if $\mathfrak{A}\models t_1(\overline{a})\!\leqslant\!x$ then there exists some $y\!\in\!\mathfrak{A}$ such that $\mathfrak{A}\models [x=t_1(\overline{a})+y]\!\wedge\![y\!\leqslant\!t_2(\overline{a})]$, and the result follows from the induction hypothesis.
For  $t=t_1\cdot t_2$, there are some $q,r\!\in\!\mathfrak{A}$ for which we have  $\mathfrak{A}\models [x=t_1(\overline{a})\!\cdot\!q+r]\!\wedge\![r\!<\!t_1(\overline{a})]\!\wedge\![q\!\leqslant\!t_2(\overline{a})]$. Two uses of induction hypothesis (for the terms $t_1$ and $t_2$) will finish the proof.
Similarly, 
$\mathcal{K}'=\{t(i_1,\ldots,i_n)\mid i_1,\ldots,i_n\!\leqslant\!\underline{\alpha}/p \ \& \ t \ {\rm is \ an} \ \mathcal{L}_A\!\!-\!\!{\rm term}\}$. Thus a correspondence by $t(i_1,\ldots,i_n)\mapsto t(\underline{i_1}/p,\ldots,\underline{i_n}/p)$ exists between $\mathcal{M}'$ and $\mathcal{K}'$.
That this mapping preserves atomic formulas of the form $u\!=\!v$ for  terms $u,v$ follows from the axioms of ${\rm Q}$. It also preserves atomic formulas of the form $u\!\leqslant\!v$ because we have in ${\rm Q}$ that $u\!\leqslant\!v\!\leftrightarrow\!w+u=v$ for some $w\!\leqslant\!v$. The preservation of negated atomic formulas follows from the ${\rm I\Delta_0}-$equivalences $x\!\not=\!y\!\leftrightarrow\!\mathfrak{s}y\!\leqslant\!x\!\vee\!\mathfrak{s}x\!\leqslant\!y$, and $x\!\not\leqslant\!y\!\leftrightarrow\!\mathfrak{s}y\!\leqslant\!x$. Thus the above  mapping is an isomorphism.
\end{proof}
\section{Conclusions}
We  saw one example of the provability of Herbrand Consistency of a theory $S$ in a (super-)theory (of it) $T$ (Theorem \ref{exp} for $S={\rm I\Delta_0+\Omega_m}, T={\rm I\Delta_0+Exp}$) and one example of the unprovability of Herbrand Consistency of $S$ in $T$ (Corollary \ref{main} for $S={\rm I\Delta_0}, T={\rm I\Delta_0+\bigwedge\Omega_j}$). The main point common in both of the results was that, if every Skolem term of $S$ has an evaluation in $T$, then $T$ may prove the Herbrand Consistency of $S$; but if there are some Skolem terms of $S$ which grow too fast for $T$ to catch them, then $T$ could not be able to derive the Herbrand Consistency of $S$. This is not a general law, but a rule of thumb.
Note that in our proof of Corollary \ref{main}, the terms ${\sf z}_i$ had the code of order $\exp(i)$ but the value of $\exp^2(i)$. And the theory ${\rm I\Delta_0+\bigwedge\Omega_j}$
 cannot catch the value of $\exp^2(i)$ by having the code $\exp(i)$; the gap is
 of exponential order. And in our proof of Theorem \ref{exp} the theory
 ${\rm I\Delta_0+Exp}$ could evaluate all the Skolem terms
 of ${\rm I\Delta_0+\Omega_m}$. A very similar argument can
 show that ${\rm I\Delta_0+Sup_-Exp}\vdash{\rm HCon}({\rm I\Delta_0+Exp})$.
 An open questione, asked by L. A. Ko{\l}odziejczyk, is that if showing the unprovability
 of Herbrand Consistency is possible without making use of fast-growing terms.
 More explicitly, if bounded formulas are required to have only variables in their bounds,
  and the re-axiomatization of ${\rm I\Delta_0}$ by the, rather restrictive, induction scheme $\forall y \big(\theta(0)\!\wedge\!\forall x\!<\!y[\theta(x)\!\rightarrow\!\theta({\mathfrak s}x)]\!\rightarrow\!\forall x\!\leqslant\!y\theta(x)\big)$ is taken into account, then is it possible to show the unprovability of the Herbrand Consistency of (this axiomatization of) ${\rm I\Delta_0}$ in itself? Note that here having terms like ${\sf z}_i$'s with double exponential values could not be possible.

The proof of our main result (Corollary \ref{main}) is very similar to the proof of the main result of \cite{Sal10} --  the unprovability ${\rm I\Delta_0}\not\vdash{\rm HCon}({\rm I\Delta_0})$. A major difference was the technique of Theorem \ref{th:c} for constructing a model of ${\rm I\Delta_0+\Omega_\textswab{m}}$ from a model of ${\rm I\Delta_0}$, for which Lemma \ref{lem2} was used. The idea of this technique is taken from \cite{Kol06}; note that the proof of our Theorem~\ref{th:a} is different from the proof of  the corresponding theorem in \cite{Kol06}, in that we had fixed one $\textswab{m}$ and followed the lines of the corresponding proof in \cite{Ada02}. That way we did not need to show the theorem for the theory ${\rm I\Delta_0+\bigwedge\Omega_j}$, and instead a simplified proof of the theorem for ${\rm I\Delta_0+\Omega_\textswab{m}}$ in \cite{Ada02} would suffice for us.

Let us finish the paper by repeating the open question asked also in \cite{Sal10}, which is whether G\"odel's Second Incompleteness Theorem for the Herbrand Consistency predicate has a uniform proof for theories containing Robinson's Arithmetic ${\rm Q}$.
\begin{question}{\rm
Can a {\sc Book} proof (in the words of Paul Erd\"{o}s) of
$T\not\vdash\mathcal{H}\textswab{C}\textswab{o}\textswab{n}(T)$ be given uniformly for any theory $T\supseteq {\rm Q}$ and a canonical  definition of Herbrand Consistency $\mathcal{H}\textswab{C}\textswab{o}\textswab{n}$?
}\end{question}
\subsection*{\underline{Acknowledgments}}
This work is a final result of a long project, pursued in different places and different times.
I would like to thank Professor Albert Visser for having me in Universiteit Utrecht as a visitor in April 2006, and Professor Gerhard J\"ager for inviting me to visit Universit\"at Bern in September 2006. I am grateful to Dr. Rashid Zaare-Nahandi in IASBS for his support during my stay there in the academic year 2006--2007.  I appreciate the stimulating discussions I had with Leszek Aleksander Ko{\l}odziejczyk during the LABCC workshop in Greifswald. This research was partially supported by the   National Elite Foundation of Iran
(\texttt{Bonyad Melli Nokhbegan} -- \url{http://www.bmn.ir/}).

\bigskip

\bigskip

\end{document}